\documentclass[11pt]{article}
\bibliographystyle{plain}

\usepackage{amssymb,amsmath,amsthm, wasysym}
\usepackage{amsfonts,amscd,graphicx, verbatim}
\usepackage{url}
\usepackage[margin=1in]{geometry}

\usepackage[utf8]{inputenc}
\author{Jakub Witaszek}
\title{The degeneration of the Grassmannian into a toric variety and the calculation of the eigenspaces of a torus action\footnote{The paper is based on the BSc thesis written at the Faculty of Mathematics, Informatics and Mechanics at Warsaw University under the supervision of prof. Jarosław Wiśniewski.}}

\date{September 2012}

\newtheorem{defi}{Definition}[subsection]
\newtheorem{theorem}[defi]{Theorem}
\newtheorem{cor}[defi]{Corollary }
\newtheorem{lem}[defi]{Lemma}
\newtheorem{prop}[defi]{Proposition}

\def\cc{\mathbb{C}}
\def\ccs{\cc^*}
\def\zz{\mathbb{Z}}
\def\nn{\mathbb{Z}_{\geq 0}}
\def\pp{\mathbb{P}}

\def\oo{\mathcal{O}}

\def\tt{\mathcal{T}}
\def\gg{\mathcal{G}}
\def\ntor{(\cc^*)^n}
\def\Xtt{X(\tt)}
\def\Xaff{X_{a\! f\! f}(\tt)}
\def\S{S(\tt)}
\def\Sn{S(\tt_n)}
\def\Snp{S(\tt_{n+1})}

\begin{document}

\maketitle
\begin{abstract}
	Using the method of degenerating a Grassmannian into a toric variety, we calculate recursive formulas for the dimensions of the eigenspaces of the action of an $n$-dimensional torus on a Grassmannian of planes in an $n$-dimensional space. In order to verify our result we compare it with the polynomial describing the Euler characteristic of invertible sheaves on a projective space with four blown-up points.
\end{abstract}

%\tableofcontents

%\listoffigures
%\listoftables

\section{Introduction}
	An $n$-dimensional torus acts on a Grassmannian of planes in an $n$ dimensional linear space in a natural way. In the paper I show a method of calculating the dimensions of the eigenspaces of the action of this torus on the coordinate ring of the Grassmannian. The result is presented as a generating function (Poincare-Hilbert series).\\

	Analysing the action of the torus may lead to a better understanding of Grassmannians - the varieties which are pervasive in mathematics and of significant importance. My method of calculating the dimensions of the eigenspaces of the action of the $n$-dimensional torus is based on the degeneration of the Grassmannian into a toric variety (see $\cite{Deg}$). Additionally, this method uses the properties of 3-valent trees and other combinatoric objects, which are interesting from the perspective of discrete mathematics.   

	The dimensions of the eigenspaces of the torus action are equal to the dimensions of the spaces of global sections of invertible sheaves on $n-3$ dimensional projective spaces with $n-1$ blown-up points (see $\cite{Muk}$). It follows from the fact that the coordinate ring of a Grassmannian may be interpreted as a ring of total coordinates of this variety. \\

	The paper is organised in the following way.

	In the second chapter, which consists of well known facts, I present the standard method of degenerating the Grassmannian of planes in an $n$ dimensional linear space into some toric variety described by a 3-valent tree with $n$ leaves. The dimensions we are looking for are invariant under this degeneration. I present properties of a semigroup of this toric variety. The dimensions can be presented in terms of characteristics of the semigroup.

	In the third chapter, I single out a special presentation of elements of the semigroup. I use this presentation and inclusion-exclusion principle to find a formula for the Poincare-Hilbert series. Additionally, I show a recursive formula for the Poincare-Hilbert series using independent combinatoric arguments. I present my recursive formula for a numerator of the Poincare-Hilbert series, but the technical proof is not attached in this version of the paper.

	In the fourth chapter, I show the method of calculating the Euler characteristic of invertible sheaves on a projective space with four blown-up points based on the Riemann-Roch formula and I compare it with the results from the previous chapter. \\

	I would like to thank prof. Jarosław Wisniewski for his indispensable help in writing this paper.

\begin{center}\end{center}
\section{Preliminaries}
In this paper we will assume that the reader knows the notation and basic facts from algebraic geometry, toric geometry and graph theory. The notions may be found in ($\cite{Ha}$), ($\cite{Cox}$, first chapter) and ($\cite{Sem}$) respectively. This chapter consists of material which is well known, so we will omit the majority of proofs. 

Trees (acyclic connected graphs) whose vertices have degree three will be called 3-valent trees. 
We will assume that every tree has a fixed embedding in a plane such that its edges do not intersect. Additionally, we will assume that leaves lie on a circle. This will be required to number the leaves.

Let $A$ be a ring with $\zz^n$-gradation where $A_0$ is a field and let $M$ be $A$-module with $\zz^n$-gradation. We define its Poincare-Hilbert series to be: 
\begin{eqnarray*} W(M) = \sum_{\lambda \in \zz^n} dim(M_{\lambda}) z^{\lambda} \in \zz[[z_1,z_2,\ldots,z_n]] \end{eqnarray*}
where $M_{\lambda}$ is a linear space of elements of $M$ which lie in the gradation $\lambda$. 

For elements of a lattice ($\lambda = (\lambda_1,\ldots, \lambda_n) \in \zz^n$) and for a ring of polynomials of $n$ variables $\zz[[z_1,\ldots,z_n]]$, we will let $z^{\lambda}$ denote $z_1^{\lambda_1}\ldots z_n^{\lambda_n}$.
\subsection{Toric geometry}
	The following notions, definitions and propositions can be found in the book $(\cite{Cox})$.
	The affine variety $T$ which is isomorphic to $\ntor$, will be called a torus. The group structure by coordinate-wise multiplication on $\ntor$ gives us the group structure on $T$. The morphism $\chi: T \to \ccs$, which is a homomorphism of algebraic groups, will be called a character of the torus $T$. 
	
	It holds that $Hom(\ntor, \ccs) \cong \zz^n$, where the character $\chi^m$ corresponds to the element $m = (m_1,\ldots,m_n) \in \zz^n$ so that
		\begin{eqnarray*}
			\chi^m(t_1,\ldots,t_n) = t_1^{m_1}\ldots t_n^{m_n}
		\end{eqnarray*}
	The group of characters of the torus $T$ will be denoted by $M_{T}$. The group of characters is a lattice. It is isomorphic to $\zz^n$, where $n$ is the dimension of the torus. 

	An affine irreducible variety $V$ will be called an affine toric variety if some Zariski open subset of it is isomorphic to a torus and the standard action of this torus on itself extends to the algebraic action on the variety $V$. 
	
	Let $T$ be a torus and let $M_{T}$ be its character lattice. The subset $\mathcal{A} = \{m_1,\ldots,m_s\} \subset M_{T}$ gives us the morphism $\Phi_{\mathcal{A}} : T \to \cc^s$ such that
	\begin{eqnarray*}
		\Phi_{\mathcal{A}}(t) = \big(\chi^{m_1}(t),\ldots,\chi^{m_s}(t)\big) \in \cc^s
	\end{eqnarray*}
	\begin{prop} The closure of the image $\Phi_{\mathcal{A}}(t)$ in $\cc^s$ is an affine toric variety with character lattice $\zz\mathcal{A}$. We will denote this variety by $Y_{\mathcal{A}}$. \label{tori_param}\end{prop}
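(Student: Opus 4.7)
The plan is to factor $\Phi_{\mathcal{A}}$ through a subtorus of $(\ccs)^s$, identify that subtorus's character lattice with $\zz\mathcal{A}$, and then check that taking the Zariski closure in $\cc^s$ yields an affine toric variety in the sense of the definition given above.

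First I would observe that the image $\Phi_{\mathcal{A}}(T)$ lies inside $(\ccs)^s \subset \cc^s$, because each coordinate $\chi^{m_i}(t)$ is a nonvanishing character. Moreover, $\Phi_{\mathcal{A}}: T \to (\ccs)^s$ is a homomorphism of algebraic groups: this is visible from the monomial description $\chi^{m_i}(t_1,\ldots,t_n) = t_1^{m_{i,1}}\cdots t_n^{m_{i,n}}$. I would then invoke the standard fact that the image of a morphism of algebraic tori is a closed subtorus, so $T_{\mathcal{A}} := \Phi_{\mathcal{A}}(T)$ is itself an algebraic torus, and set
\begin{eqnarray*}
  Y_{\mathcal{A}} = \overline{T_{\mathcal{A}}} \subset \cc^s.
\end{eqnarray*}
The main obstacle here is justifying that the image is closed (and hence a subtorus rather than merely a constructible subgroup); I would argue it by dualizing to character lattices, where the map $\zz^s \to M_T$, $e_i \mapsto m_i$, has image $\zz\mathcal{A}$, a saturated subgroup of a finitely generated free abelian group if we replace $\zz\mathcal{A}$ by its saturation, and deduce that $T_{\mathcal{A}}$ is cut out in $(\ccs)^s$ by the binomial relations coming from the kernel of this map.

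Next I would compute the character lattice of $T_{\mathcal{A}}$. By construction every character of $(\ccs)^s$ restricts to a character of $T_{\mathcal{A}}$, giving a map $\zz^s \to M_{T_{\mathcal{A}}}$. Pulling back by $\Phi_{\mathcal{A}}$, the $i$-th coordinate character of $(\ccs)^s$ goes to $\chi^{m_i} \in M_T$, so the composition $\zz^s \to M_{T_{\mathcal{A}}} \to M_T$ has image $\zz\mathcal{A}$. Since $\Phi_{\mathcal{A}}: T \twoheadrightarrow T_{\mathcal{A}}$ is surjective, the induced map $M_{T_{\mathcal{A}}} \hookrightarrow M_T$ is injective, and the map $\zz^s \to M_{T_{\mathcal{A}}}$ is surjective (every character of the closed subgroup $T_{\mathcal{A}}$ of $(\ccs)^s$ extends to one of $(\ccs)^s$). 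Combining these identifications gives $M_{T_{\mathcal{A}}} \cong \zz\mathcal{A}$, as claimed.

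Finally, I would verify that $Y_{\mathcal{A}}$ satisfies the three conditions of an affine toric variety. Affineness and irreducibility are immediate: $Y_{\mathcal{A}}$ is a closed subset of $\cc^s$ and is the closure of the irreducible set $T_{\mathcal{A}}$. The intersection $Y_{\mathcal{A}} \cap (\ccs)^s$ equals $T_{\mathcal{A}}$, and it is Zariski open in $Y_{\mathcal{A}}$ and isomorphic to the torus $T_{\mathcal{A}}$. For the action, the multiplication of $T_{\mathcal{A}} \subset (\ccs)^s$ on itself is the restriction of the coordinate-wise multiplication action of $(\ccs)^s$ on $\cc^s$, which is regular on all of $\cc^s$. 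Since translation by any $t \in T_{\mathcal{A}}$ is an automorphism of $\cc^s$ that carries the dense subset $T_{\mathcal{A}} \subset Y_{\mathcal{A}}$ into itself, it carries $Y_{\mathcal{A}}$ into itself, so the action extends algebraically to $Y_{\mathcal{A}}$. This completes the proof.
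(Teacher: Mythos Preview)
Your proof is correct in outline and in most details, and it is in fact more than the paper provides: the proposition sits in the preliminaries section, which explicitly states that proofs are omitted and refers the reader to Cox's book. So there is no ``paper's own proof'' to compare against beyond the citation.

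One comment on the exposition: your parenthetical remark about saturation is muddled and unnecessary. You write that $\zz\mathcal{A}$ is ``a saturated subgroup of a finitely generated free abelian group if we replace $\zz\mathcal{A}$ by its saturation,'' which is circular. What you actually need is simpler: since $\zz\mathcal{A}$ is a subgroup of the free abelian group $M_T$, it is automatically torsion-free, hence $\zz^s/L$ (where $L = \ker(\zz^s \to M_T)$) is a lattice, and the binomial equations coming from $L$ cut out a genuine subtorus of $(\ccs)^s$ rather than a disconnected diagonalizable group. That subtorus is the image of $\Phi_{\mathcal{A}}$. No saturation argument is required, and introducing one only obscures the point. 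With that sentence cleaned up, the argument is complete.
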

	Each affine toric variety is isomorphic to $Y_{\mathcal{A}}$ for some $\mathcal{A}$.
	
	A semigroup $S$ will be called an affine semigroup if it is isomorphic to
		\begin{eqnarray*}
			\mathbb{N}\mathcal{A} = \{\sum_{m\in \mathcal{A}} a_m m \mbox{ } | \mbox{ } a_m \in \mathbb{N}\}
		\end{eqnarray*}
		for a finite subset $\mathcal{A}$ of some lattice $M$. 
			
	$\mathbb{N}\mathcal{A}$ will be called a semigroup of the affine toric variety $Y_{\mathcal{A}}$. 
	Let $\cc[S]$ be the algebra whose vector structure has a basis given by the elements of the semigroup $S$, with multiplication derived from the action of $S$. We will call it the algebra of the semigroup $S$. 
	
	\begin{prop} Let $M$ be a lattice and let $\mathcal{A}$ be a finite subset of $M$. Then $Spec(\cc[\mathbb{N} \mathcal{A}])$ is isomorphic to the affine toric variety $Y_{\mathcal{A}}$. The character lattice of the affine toric variety $Spec(\cc[\mathbb{N} \mathcal{A}])$ is $\zz \mathcal{A}$. \label{tori_semigroup} \end{prop}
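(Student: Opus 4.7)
The plan is to identify the coordinate ring of $Y_\mathcal{A}$ explicitly with the semigroup algebra $\cc[\mathbb{N}\mathcal{A}]$, and then read off the character lattice from the dense torus sitting inside this variety.

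First, I would examine the parametrization map $\Phi_\mathcal{A}: T \to \cc^s$ from Proposition \ref{tori_param}. Its pullback on coordinate rings is the $\cc$-algebra homomorphism $\Phi_\mathcal{A}^* : \cc[x_1, \ldots, x_s] \to \cc[M_T]$ determined by $x_i \mapsto \chi^{m_i}$, where $\cc[M_T]$ is the coordinate ring of $T$. Since $Y_\mathcal{A}$ is by definition the Zariski closure of the image of $\Phi_\mathcal{A}$, its coordinate ring is isomorphic to $\cc[x_1, \ldots, x_s] / \ker \Phi_\mathcal{A}^*$, and hence to the image of $\Phi_\mathcal{A}^*$. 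This image is the $\cc$-subalgebra of $\cc[M_T]$ generated by the Laurent monomials $\chi^{m_1}, \ldots, \chi^{m_s}$; because $\chi^m \cdot \chi^{m'} = \chi^{m+m'}$, it has $\cc$-basis $\{\chi^m : m \in \mathbb{N}\mathcal{A}\}$, which is precisely $\cc[\mathbb{N}\mathcal{A}]$. Passing to $Spec$ then yields $Y_\mathcal{A} \cong Spec(\cc[\mathbb{N}\mathcal{A}])$.

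Second, for the character lattice statement, I would identify the embedded torus inside $Y_\mathcal{A}$. This torus is the image $\Phi_\mathcal{A}(T)$, which equals the quotient $T / \ker \Phi_\mathcal{A}$ where $\ker \Phi_\mathcal{A} = \{t \in T : \chi^{m_i}(t) = 1 \text{ for all } i\}$. On the algebra side, restricting to this torus amounts to localizing $\cc[\mathbb{N}\mathcal{A}]$ at each generator $\chi^{m_i}$, which produces the group algebra $\cc[\zz\mathcal{A}]$ of the sublattice $\zz\mathcal{A} \subset M_T$. Since $Spec$ of the group algebra of a lattice $L$ is a torus whose character lattice is $L$, we conclude that the character lattice of $Spec(\cc[\mathbb{N}\mathcal{A}])$ is $\zz\mathcal{A}$.

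The main subtlety I anticipate is the identification of the image of $\Phi_\mathcal{A}^*$ with $\cc[\mathbb{N}\mathcal{A}]$: one must verify that distinct characters $\chi^m$ remain $\cc$-linearly independent in $\cc[M_T]$, so that the subalgebra they generate truly has basis indexed by $\mathbb{N}\mathcal{A}$ and not by some proper quotient of this semigroup. This is the classical linear independence of characters, applied here to the torsion-free abelian group $M_T$, and once granted, the spectrum/group-algebra dictionary delivers both claims of the proposition at once.
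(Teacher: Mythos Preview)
Your argument is correct and is essentially the standard proof (as in Cox--Little--Schenck): identify the coordinate ring of the closure of $\Phi_{\mathcal{A}}(T)$ with the image of $\Phi_{\mathcal{A}}^*$, recognize that image as $\cc[\mathbb{N}\mathcal{A}]$ via linear independence of characters, and then localize at the generators to find the dense torus $Spec(\cc[\zz\mathcal{A}])$.

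However, note that the paper does not actually supply a proof of this proposition. It is placed in the preliminaries section, which opens with ``This chapter consists of material which is well known, so we will omit the majority of proofs,'' and the subsection begins with ``The following notions, definitions and propositions can be found in the book (\cite{Cox}).'' So there is nothing to compare against: your write-up fills in a proof that the paper deliberately omits by citation, and the approach you give is precisely the one in the cited reference.
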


\subsection{The Grassmannian of planes}
We will analyse the variety $\gg$ in the space $\mathbb{P}_{\cc}^{n(n-1)/2}$ with coordinates $x_{i,j}$ for $1 \leq i < j \leq n$, given by the equations
\begin{eqnarray}
	x_{i,j}x_{k,l} - x_{i,k}x_{j,l} + x_{i,l}x_{j,k} = 0 \hbox{ for } 1 \leq i < j < k < l \leq n
	\label{g_eq}
\end{eqnarray}

The algebra of this projective variety will be denoted $\cc[\gg] := \cc[x_{i,j}]/I$, where $I$ is the ideal generated by the proceeding polynomials. $\gg$ is a Grassmannian of planes in an $n$-dimensional linear space.
\begin{comment}
\begin{proof}
	Grassmannian of planes is described by simple bivectors in second antisymmetric power of $n$-dimensional space. Let $e_i$ for $1\leq i \leq n$ be a basis of our space.

	Let $P = \sum_{1\leq i < j \leq n} x_{i,j} e_i \wedge e_j$. The theorem($\cite{Kos}$, 6.10) says that $P$ is simple if and only if $P \wedge P = 0$. This condition is equivalent to ($\ref{g_eq}$).
\end{proof}
\end{comment}

We introduce the action of the torus $\ntor$ on the variety $\gg$. Let $\sigma_t = (t_1,t_2,\ldots, t_n) \in \ntor$ act on coordinates in the following way:
\begin{eqnarray*}
	\sigma_t(x_{i,j}) = t_i t_j x_{i,j}.
\end{eqnarray*}
In other words, $t_k$ acts on $x_{i,j}$ with weight one exactly when $k=i$ or $k=j$. This action is well defined on the quotient ring $\cc[\gg]$ because $\left(\sigma_t \left(x_{i,j}x_{k,l} \right) = t_i t_j t_k t_l x_{i,j}x_{k,l}\right)$ and so generators of the ideal consist of monomials which belong to the same gradation. 
The action of the torus defines the natural $\zz^n$-gradation on $\cc[\gg]$. The element $f \in \cc[\gg]$ is in gradation $\lambda \in \zz^n$ if:
\begin{eqnarray*}
	\sigma_t(f) = t^{\lambda}f
\end{eqnarray*}

Our main goal is to calculate the Poincare-Hilbert series of the algebra $\cc[\gg]$ for the gradation described above.\\

The following theorem holds:
\begin{theorem} Let $A$ be a finitely generated $A_{0}$ algebra, where $A_{0}$ is a field, generated by elements $a_1, \ldots, a_k$ which belong to gradations $\lambda_1, \ldots, \lambda_k \in \zz^n$ respectively. Let $M$ be finitely generated over $A$. Then the Poincare-Hilbert series can be written in the following form:
\begin{eqnarray*}
	F(z_1,\ldots, z_n) / \prod_{i=1}^k (1 - z^{\lambda_i}) \mbox{ where } F \in \zz[z_1,\ldots,z_n]
\end{eqnarray*} 
\label{Grassmanian_Hilbert}
\end{theorem}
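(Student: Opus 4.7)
The plan is to proceed by induction on the number $k$ of algebra generators of $A$ over $A_{0}$.

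For the base case $k=0$, one has $A = A_{0}$ a field, so $M$ is a finite-dimensional $\zz^n$-graded $A_{0}$-vector space (being finitely generated over a field). Its Poincare-Hilbert series is then a finite $\zz$-linear combination of monomials $z^{\lambda}$, i.e.\ a polynomial in $\zz[z_{1},\ldots,z_{n}]$, and the empty product in the denominator equals $1$.

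For the inductive step I would look at the $A$-linear endomorphism given by multiplication by the last generator, $\phi : M \to M$, $\phi(m) = a_k m$, which shifts gradation by $\lambda_k$. Setting $K = \ker \phi$ and $C = \mathrm{coker}\,\phi$, for each $\lambda \in \zz^n$ there is an exact sequence of $A_{0}$-vector spaces
\begin{eqnarray*}
0 \to K_{\lambda} \to M_{\lambda} \xrightarrow{\phi} M_{\lambda + \lambda_k} \to C_{\lambda + \lambda_k} \to 0.
\end{eqnarray*}
Since $A$ is Noetherian (as a finitely generated algebra over a field) and $M$ is finitely generated, both $K$ and $C$ are finitely generated graded $A$-modules, and by construction $a_k$ annihilates each of them. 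Hence they are finitely generated over $A/(a_{k})$, which is generated as an $A_{0}$-algebra by the images of $a_{1},\ldots,a_{k-1}$, still lying in gradations $\lambda_{1},\ldots,\lambda_{k-1}$, so the inductive hypothesis applies.

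Taking the alternating sum of dimensions in the exact sequence, multiplying by $z^{\lambda}$ and summing over $\lambda \in \zz^n$ collapses the two middle terms into $(1 - z^{-\lambda_k})W(M)$ and the two outer terms into $W(K) - z^{-\lambda_k}W(C)$, giving after clearing $z^{\lambda_k}$:
\begin{eqnarray*}
W(M)\,(1 - z^{\lambda_k}) \;=\; W(C) - z^{\lambda_k}\, W(K).
\end{eqnarray*}
Substituting the inductive expressions for $W(K)$ and $W(C)$ (polynomials over $\prod_{i=1}^{k-1}(1 - z^{\lambda_i})$) and dividing through by $1 - z^{\lambda_k}$ yields the required form for $W(M)$.

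The main subtlety will be setting up the formal-series arithmetic rigorously: the identity above should be interpreted in an appropriate localisation of $\zz[[z_{1},\ldots,z_{n}]]$ in which the factors $1 - z^{\lambda_i}$ are invertible, and one must separately verify that every graded component $M_{\lambda}$ is finite-dimensional so that $W(M)$ is a well-defined series in the first place; this finite-dimensionality follows by the same induction, reducing to the base case where $M$ is finite-dimensional over $A_{0}$.
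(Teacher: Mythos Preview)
Your proposal is correct and follows essentially the same route as the paper: the paper simply cites Atiyah--Macdonald, Theorem~11.1, adapted to a $\zz^n$-grading, i.e.\ induction on the number $k$ of algebra generators, using the exact sequence coming from multiplication by $a_k$ and the fact that its kernel and cokernel are finitely generated over $A/(a_k)$. Your added remarks on finite-dimensionality of the graded pieces and on the ambient ring for the series are extra care not spelled out in the paper, but the core argument is identical.
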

We should note that each rational function with an invertible constant term in its denominator defines a unique formal power series.

The proof of this theorem repeats the argument from ($\cite{Ati}$, 11.1) applied to a multidimensional gradation.
\begin{comment}\begin{proof}
	We will carry out the induction with respect to the number of generators of the ring.

	For $k=0$, it holds $A = A_0$ and the number of nonzero gradations of module $M$ does not exceed the number of generators. Therefore the Poincare-Hilbert series $M$ is polynomial.
	
	For $k>0$, let us assume, that the thesis holds, if the number of generators is less than $k$. Let us consider the following exact sequence of morphism of degree 0 (mapping elements in gradation $\lambda$ to elements of gradation $\lambda$) of graded modules ($\cdot a_k$ means multiplying by $a_k$):
	\begin{eqnarray*}
		0 \longrightarrow ker( \cdot a_k ) \longrightarrow M \mathop{\longrightarrow}^{ \cdot a_k } M_{\lambda_k} \longrightarrow coker( \cdot a_k ) \longrightarrow 0
	\end{eqnarray*}
	where $M_{\lambda_k}$ is the module $M$ shifted by the gradation $\lambda_k$. Taking the dimension of the gradation is additive function on exact sequences, so:
	\begin{eqnarray*}
 W(M) - W(M_{\lambda_k}) = W(ker(\cdot a_k)) - W(coker( \cdot a_k)) 	
 	\end{eqnarray*}
	Therefore, it holds:
	\begin{eqnarray*}
 W(M)(1 - z^{\lambda_k}) = W(ker(\cdot a_k)) - W(coker( \cdot a_k)) 	
	\end{eqnarray*}
	Note that $ker(\cdot a_k)$ and $coker( \cdot a_k)$ are modules over algebra $A/(a_k)$, generated by $a_1,\ldots,a_{k-1}$. Thesis follows from inductive hypothesis. // 
\end{proof} \end{comment}

The coordinate $x_{i,j}$ of the variety $\gg$ belongs to the gradation $t_it_j$. Therefore, the Poincare-Hilbert series of the Grassmannian can be written in the following form:
	\begin{eqnarray*}F(z_1,\ldots, z_n) / \prod_{1 \leq i < j \leq n} (1 - z_iz_j) \mbox{ where } F \in \zz[z_1,\ldots,z_n] \end{eqnarray*}

\subsection{A variety described by a tree}
Let us recall that we assume that each tree has a fixed embedding in a plane in which edges do not intersect and leaves lie on a circle.

A 3-valent tree with $n$ leaves (for $n>2$) has $2n-2$ vertices, $2n-3$ edges and contains a vertex which is a neighbour of two leaves. We number leaves (anticlockwise around the tree) and edges with consecutive natural numbers starting from $1$. Between each pair of leaves there exists a unique path. Let $p_{i,j}$ be the set of numbers labelling the edges of the path connecting $i$ and $j$. 

Let $\tt$ be some 3-valent tree with $n$ leaves. Let us construct a toric variety described by this tree. Consider the torus $(\ccs)^{2n-3}$ spanned by the edges of the tree (let $y_1,\ldots, y_{2n-3}$ be the coordinates corresponding to subsequent edges) and the subset of its characters $\mathcal{A_{\tt}} = \{\chi_{i,j} \mbox{ } | \mbox{ } 1 \leq i < j < n\}$, where
\begin{eqnarray*}
	\chi_{i,j}(y_1,\ldots,y_{2n-3}) = \prod_{e \in p_{i,j}} y_e    
\end{eqnarray*}
In other words, the character $\chi_{i,j}$ returns the product of coordinates on the path connecting $i$-th and $j$-th leaves.

The set of characters $\mathcal{A_{\tt}}$ gives us the toric variety $Y_{\mathcal{A_{\tt}}}$. We will denote it by $\Xaff$ (because of its importance in the paper). It is the closure in $\cc^{n(n-1)/2}$ of the map $\Phi_{\mathcal{A_{\tt}}} : (\ccs)^{2n-3} \to (\ccs)^{n(n-1)/2}$ from the space spanned on edges to the space spanned on pairs of leaves which maps pairs of leaves to the product of coordinates on the path connecting those leaves.\\

	The variety $\Xaff$ is a cone - it induces a projective variety in a space $\pp_\cc^{n(n-1)/2-1}$.
\begin{comment}\begin{proof}
	Let us note that for fixed $c \in \cc$ and $2n-3$ complex numbers $c_i$:
	\begin{eqnarray*}
		&&	c_i = c \mbox{ for } i \mbox{ corresponding to an leaf edge} \\ 
		&&	c_i = 1 \mbox { in other case}  
	\end{eqnarray*}
	It holds:
	\begin{eqnarray*}
		\Phi_{\mathcal{A_{\tt}}}(c_1 \cdot y_1, c_2 \cdot y_2, \ldots, c_{2n-3} \cdot y_{2n-3}) = c^2 \Phi_{\mathcal{A}}(y_1,\ldots,y_{2n-3}) 	
	\end{eqnarray*}
	Due to the fact that in $\cc$, each number has a "radical", the image of parametrization is a cone. The thesis of the proposition holds, because the closure of a cone is a cone.
\end{proof}\end{comment}
The variety constructed in this way will be called a variety described by a tree and will be denoted by $\Xtt$. Let us note, that $\Xaff$ is an affine cone over $\Xtt$.\\

\begin{defi} Let $i<j$ be some leaves of the tree $\tt$. We define an element $w_{i,j}$ in the lattice $Z^{2n-3}$ associated to the edges of $T$ which has ones on coordinates corresponding to edges on shortest path between $i$-th and $j$-th leaves (i.e. on positions $p_{i,j}$) and zeroes on others. \end{defi}
By $w_{i,j}$ we will also mean the path between $i$-th and $j$-th leaves.

Let $\zz^{2n-3}$ be a standard character lattice of a torus $(\ccs)^{2n-3}$. Under this identification, vector $w_{i,j} \in \zz^{2n-3}$ corresponds to the character $\chi_{i,j} \in \mathcal{A_{\tt}}$. The fact ($\ref{tori_param}$) implies that the character lattice $\Xaff$ is generated by paths, i.e.:
	\begin{eqnarray*} 
		\zz \mathcal{A_{\tt}} \cong \{\sum_{i,j} a_{i,j} w_{i,j} \mbox { } | \mbox { } a_{i,j} \in \zz\}
	\end{eqnarray*}
In informal words: we can perceive elements of the $\Xaff$ lattice as such assignments of integer numbers to edges which come from adding and subtracting paths (as vectors).\\

We will analyse the semigroup $\S$ describing the toric variety $\Xaff$ in order to find our Hilbert series: 
	\begin{eqnarray*} 
		\S = \nn \mathcal{A_{\tt}} \cong \{\sum_{i,j} a_{i,j} w_{i,j} \mbox { } | \mbox { } a_{i,j} \in \nn\}
	\end{eqnarray*}
The semigroup $\S$ is the subset of the character lattice of the variety $\Xaff$ and is generated by positive (i.e. with positive coefficients) linear combinations of paths. In informal words: we can perceive elements of the semigroup $\S$ as such assignments of integer nonnegative numbers to edges, which come from adding paths (as vectors). The operation in this semigroup is just adding numbers on corresponding edges.\\

Let us recall that ($\ref{tori_semigroup}$) implies that the variety $\Xaff$ is isomorphic to the variety $Spec(\cc[\S])$. Therefore their algebras are also isomorphic.
\begin{eqnarray*} \cc[\Xaff] \cong \cc[\S] \end{eqnarray*}

Obviously, elements of the semigroup do not decompose uniquely into positive linear combinations of paths. E.g. for (the circled numbers are the values of the edges and the other numbers label the edges):
\begin{figure}[here] \centering \includegraphics[width=10cm]{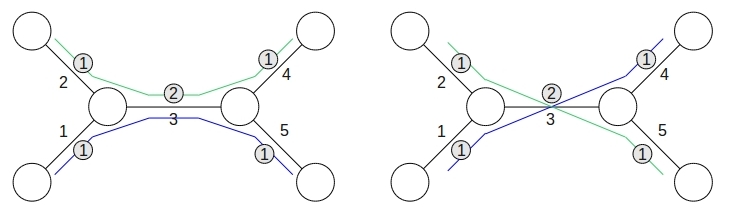} \caption{Examples of paths in the tree} \label{fig:graphic_tree} \end{figure}
\begin{center}\end{center}
we have:
	\begin{itemize}
		\item for the first tree: $[1,1,2,1,1] = [1,0,1,0,1] + [0,1,1,1,0]$
		\item for the second tree: $[1,1,2,1,1] = [1,0,1,1,0] + [0,1,1,0,1]$
	\end{itemize}

We introduce the following natural metric on vertices of a tree. The distance between two vertices is the number of edges in the shortest path which connects those vertices. We will denote the distance between the vertices $i$ and $j$ by $d(i,j)$.

Let us observe, that in a 3-valent tree paths have common vertex if and only if they have a common edge.

We prove the following lemma, because we will use similar ideas later in the paper.
\begin{lem}
	\label{paths_intersection}
	Let $1 \leq i < j < k < l \leq n$. Then either $w_{i,j}$ intersects $w_{k,l}$, or $w_{i,l}$ intersects $w_{j,k}$. $w_{i,k}$ always intersects $w_{j,l}$.
\end{lem}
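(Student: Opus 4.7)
The plan is to analyze the minimal subtree of $\tt$ spanning the four leaves $i,j,k,l$ and determine its combinatorial structure. First, this Steiner subtree has exactly these four leaves (plus possibly some degree-$2$ internal vertices inherited from $\tt$). After contracting every degree-$2$ vertex, I would obtain a tree whose internal vertices all have degree $\geq 3$; but since $\tt$ is 3-valent, every such internal vertex has degree exactly $3$. A sum-of-degrees count then forces the contracted tree to have exactly two internal branch points $v,w$, each adjacent in the contracted tree to two of the four leaves, and joined to each other by a unique interior path $P_{vw}$ in $\tt$. (In particular $v\neq w$, since a single vertex of degree $3$ cannot separate four leaves.)

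Next, I would use the planar embedding of $\tt$ (leaves on a circle, edges non-crossing) to pin down which leaves are attached to $v$ versus $w$. Removing the interior of $P_{vw}$ from $\tt$ disconnects it into two subtrees, and since edges do not cross and leaves lie on the bounding circle, the leaves of each subtree form a contiguous arc in the cyclic order. This rules out the ``crossing'' pairing $\{i,k\}\mid\{j,l\}$ and leaves exactly two possibilities: either the $v$-side leaves are $\{i,j\}$ and the $w$-side leaves are $\{k,l\}$, or the split is $\{i,l\}\mid\{j,k\}$.

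Finally, I would verify both claims by tracking which paths traverse $P_{vw}$. A path $w_{a,b}$ passes through $P_{vw}$ precisely when $a$ and $b$ lie on opposite sides of it (because $P_{vw}$ is a cut in the tree), and two traversing paths automatically share every vertex of $P_{vw}$. In both pairings $\{i,j\}\mid\{k,l\}$ and $\{i,l\}\mid\{j,k\}$ the leaves $i,k$ lie on opposite sides, as do $j,l$; hence $w_{i,k}$ and $w_{j,l}$ always traverse $P_{vw}$ and therefore always intersect, which gives the second claim. For the first claim: in pairing $\{i,j\}\mid\{k,l\}$ the paths $w_{i,l}$ and $w_{j,k}$ both cross $P_{vw}$, while in pairing $\{i,l\}\mid\{j,k\}$ the paths $w_{i,j}$ and $w_{k,l}$ do, so in either case one of the two pairs intersects.

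The main subtlety will be justifying the planarity step rigorously, namely that cutting $\tt$ along $P_{vw}$ partitions the leaves into two cyclic arcs. I would handle this by a Jordan-curve argument: any internal edge $e$, together with one of the two arcs of the bounding circle, bounds a disc whose interior meets $\tt$ exactly in the corresponding component of $\tt\setminus e$, and this extends by induction on length to the full path $P_{vw}$. Everything else is a straightforward case check once the non-crossing dichotomy has been established.
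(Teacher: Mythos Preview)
Your argument is correct and reaches the same conclusion, but it is organized differently from the paper's proof. The paper picks a single pivot vertex $v$, namely the last vertex common to the three paths $w_{i,j},w_{i,k},w_{i,l}$; since $\tt$ is $3$-valent, removing $v$ splits the tree into exactly three subtrees $\tt_1,\tt_2,\tt_3$ with $i\in\tt_1$. The cyclic order of the leaves then forces $j\in\tt_2$ and $l\in\tt_3$, and the proof finishes by a two-case check on whether $k\in\tt_2$ or $k\in\tt_3$. You instead work with the full quartet topology: you locate both Steiner branch vertices $v,w$ and the internal path $P_{vw}$ joining them, and then use planarity to argue that the bipartition of $\{i,j,k,l\}$ induced by $P_{vw}$ cannot be the crossing split $\{i,k\}\mid\{j,l\}$.

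Your route is a bit longer but yields more structure for free: the internal path $P_{vw}$ is exactly the common segment that produces the strict inequality in the subsequent corollary, and your framework also shows immediately that the ``either/or'' in the lemma is exclusive (the non-crossing pair of paths lies on opposite sides of $P_{vw}$ and hence shares no edge). The paper's argument is shorter because it only needs one of the two branch points, and its appeal to planarity is lighter: it only requires that the three subtrees hanging off $v$ meet the boundary circle in three consecutive arcs, which follows directly from the non-crossing embedding without the Jordan-curve induction along a multi-edge path that you flag as the main subtlety.
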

\begin{proof}
	Let $v$ be the last common vertex of $w_{i,j}, w_{i,k}$ and $w_{i,l}$. The vertex $i$ is a leaf, so $v \neq i$. The vertex $v$ splits the tree into three subtrees: $\tt_1, \tt_2, \tt_3$ (numbered anticlockwise in such a way that $i$ belongs to $\tt_1$). Due to the order of leaves, $j$ must belong to $\tt_2$ and $l$ to $\tt_3$. If: 
	\begin{enumerate}
		\item $k$ lies in $\tt_2$, then path $w_{i,l}$ (disjoint with $\tt_2$) doesn't intersect $w_{j,k}$ (belonging to $\tt_2$), but $w_{i,j}$ intersects $w_{k,l}$ (the edge from $v$ to $\tt_2$ lies in the intersection).
		\item $k$ lies in $\tt_3$, then path $w_{i,j}$ (disjoint with $\tt_3$) doesn't intersect $w_{k,l}$ (belonging to $\tt_3$), but $w_{i,l}$ intersects $w_{j,k}$ (the edge from $v$ to $\tt_3$ lies in the intersection).
	\end{enumerate}
	Paths $w_{i,k}$ and $w_{j,l}$ contain the vertex $v$, so they intersect.

\begin{figure}[here] \centering \includegraphics[width=12cm]{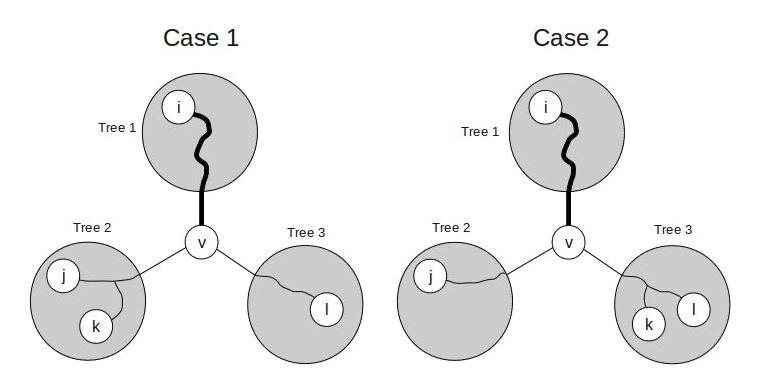} \caption{Two possibilities of intersection} \label{fig:graphics_tree_intersection} \end{figure}
\end{proof}

The lemma implies the following corollary: 
\begin{cor}
	The following inequalities hold:
	\begin{enumerate}
		\item $d(i,l) + d(j,k) < d(i,j)+d(k,l) = d(i,k) + d(j,l)$, if $w_{i,j}$ intersects $w_{k,l}$
		\item $d(i,j) + d(k,l) < d(i,l)+d(j,k) = d(i,k) + d(j,l)$, if $w_{i,l}$ intersects $w_{j,k}$ 
	\end{enumerate}
	\label{cor_inters}
\end{cor}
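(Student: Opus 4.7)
The plan is to re-use the decomposition of the tree from the proof of Lemma~\ref{paths_intersection} and simply read off the six pairwise distances in terms of lengths of subpaths. Take $v$ to be the last common vertex of $w_{i,j}$, $w_{i,k}$ and $w_{i,l}$, so that $v$ splits the tree into subtrees $\tt_1,\tt_2,\tt_3$ with $i\in\tt_1$, $j\in\tt_2$, $l\in\tt_3$.

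Consider first the case where $w_{i,j}$ intersects $w_{k,l}$, i.e.\ $k\in\tt_2$. Let $u$ be the last common vertex (the one farthest from $v$) of the two subpaths going from $v$ to $j$ and from $v$ to $k$ inside $\tt_2\cup\{v\}$. Set
\[ a=d(i,v),\quad b=d(v,u),\quad c=d(u,j),\quad e=d(u,k),\quad f=d(v,l). \]
Because every shortest path between two of the four leaves is obtained by concatenating the relevant segments at $v$ and at $u$, one gets
\[ d(i,j)=a+b+c,\ d(i,k)=a+b+e,\ d(i,l)=a+f, \]
\[ d(j,k)=c+e,\ d(j,l)=b+c+f,\ d(k,l)=b+e+f. \]
Adding these in the three possible ways,
\[ d(i,j)+d(k,l)=d(i,k)+d(j,l)=a+2b+c+e+f,\quad d(i,l)+d(j,k)=a+c+e+f, \]
which already gives the equality and reduces the inequality to showing $b\geq 1$.

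Showing $b\geq 1$ is the one spot requiring the $3$-valency hypothesis, and it is really the only subtlety. At the vertex $v$ there are exactly three incident edges, one pointing into each of $\tt_1,\tt_2,\tt_3$. The path $w_{i,j}$ must traverse the edge at $v$ pointing into $\tt_2$ (to reach $j$), and the path $w_{k,l}$ must traverse the \emph{same} edge (to leave $\tt_2$ towards $l$). Hence that common edge lies on the segment from $v$ to $u$, forcing $b\geq 1$ and giving $d(i,l)+d(j,k)<d(i,j)+d(k,l)$.

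The second case, in which $w_{i,l}$ intersects $w_{j,k}$, is entirely symmetric: now $k\in\tt_3$ and one defines $u'$ as the last common vertex of the paths from $v$ to $k$ and from $v$ to $l$; an identical computation yields $d(i,k)+d(j,l)=d(i,l)+d(j,k)=a+2b'+\cdots$ while $d(i,j)+d(k,l)$ is smaller by $2b'\geq 2$. Thus no step beyond bookkeeping is needed, and the only place where geometric reasoning enters is the $3$-valency argument ensuring strict inequality.
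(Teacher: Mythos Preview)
Your proof is correct. The paper itself gives no proof of this corollary at all---it simply states it after the words ``The lemma implies the following corollary''---so your argument supplies exactly the details the paper omits, and does so via the intended route: you reuse the vertex $v$ and the decomposition into $\tt_1,\tt_2,\tt_3$ from the proof of Lemma~\ref{paths_intersection}, introduce the natural branching vertex $u$ inside the relevant subtree, and read off all six distances. The computation of the three pairwise sums is right, and your use of $3$-valency to force $b\geq 1$ (hence strict inequality) is the correct and necessary observation.
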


\begin{theorem}
One of the following polynomials lie in the ideal of $\Xaff$ for every four numbers $1 \leq i < j < k < l \leq n$:
\begin{enumerate}
	\item $W_1(i,j,k,l) = x_{i,j}x_{k,l} - x_{i,k}x_{j,l}$, if $w_{i,j}$ intersects $w_{k,l}$
	\item $W_2(i,j,k,l) = x_{i,l}x_{j,k} - x_{i,k}x_{j,l}$, if $w_{i,l}$ intersects $w_{j,k}$
\end{enumerate}
The polynomials given above generate the ideal $\Xaff$.
\end{theorem}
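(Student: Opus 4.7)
The plan is to use that $\Xaff \cong \mathrm{Spec}(\cc[\S])$ is an affine toric variety, so by a standard fact from toric geometry (cf.\ \cite{Cox}, Chapter 1) its ideal $I(\Xaff)$ is generated by binomials $x^{\alpha} - x^{\beta}$ with $\sum_{i<j} \alpha_{i,j} w_{i,j} = \sum_{i<j} \beta_{i,j} w_{i,j}$ in $\zz^{2n-3}$. The proof then splits into two tasks: (i) showing that each $W_1, W_2$ in the statement is such a binomial, and (ii) showing that every such binomial is generated by the $W_1, W_2$.

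For (i) I would reuse the setup from the proof of Lemma \ref{paths_intersection}. Let $v$ be the last common vertex of $w_{i,j}, w_{i,k}, w_{i,l}$, so that $\tt \setminus \{v\}$ splits into three subtrees $\tt_1, \tt_2, \tt_3$ with $i \in \tt_1$, $j \in \tt_2$, $l \in \tt_3$. In case (1), where $k \in \tt_2$, let $m$ be the last common vertex of the paths from $v$ to $j$ and from $v$ to $k$ inside $\tt_2$, and let $A, B, C, D, E \in \zz^{2n-3}$ denote the indicator vectors of the edges on the paths from $i$ to $v$, from $v$ to $m$, from $m$ to $j$, from $m$ to $k$, and from $v$ to $l$ respectively. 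A direct computation yields
\[ w_{i,j} + w_{k,l} = A + 2B + C + D + E = w_{i,k} + w_{j,l}, \]
so $W_1(i,j,k,l) \in I(\Xaff)$. Case (2) is entirely symmetric and yields $W_2(i,j,k,l) \in I(\Xaff)$.

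For (ii) I would proceed by induction on the total degree of a binomial relation $x^{\alpha} - x^{\beta}$. After cancelling common factors, view the two monomials as multisets of paths $P_\alpha, P_\beta$ on $\tt$ with equal sum in $\zz^{2n-3}$ but no common path. Fix $w_{a,b} \in P_\alpha$. Since the leaf-edge coordinate at $a$ must agree in both sums, there is some $w_{a,c} \in P_\beta$, and similarly some $w_{b,d} \in P_\beta$. Ordering $\{a,b,c,d\}$ cyclically and applying Lemma \ref{paths_intersection} identifies which of $W_1, W_2$ applies to the pair $w_{a,c}, w_{b,d}$; one then uses that relation (possibly preceded by an auxiliary swap through the ``crossing'' pair always guaranteed by the lemma) to rewrite $P_\beta$ into a new decomposition of the same semigroup element that contains $w_{a,b}$. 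This strictly increases the number of paths shared with $P_\alpha$, and iterating reduces the relation to $0$.

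The hard part will be justifying the rewrite step in (ii): the quadruple $\{a,b,c,d\}$ need not be in the configuration that allows a single $W_1$ or $W_2$ application to directly produce $w_{a,b}$, since of the three possible pairings of four fixed leaves only two yield the same element of $\S$ (the third being strictly smaller by Corollary \ref{cor_inters}). One must show that by an appropriate preliminary swap through the crossing pair $w_{i,k}, w_{j,l}$, any two equal decompositions can be connected by a sequence of elementary $W_1, W_2$-moves. The underlying combinatorial statement --- that the graph of factorizations of a fixed element of $\S$ into a sum of paths, with edges given by $W_1, W_2$-swaps, is connected --- is the genuine combinatorial core of the proof and relies essentially on the $3$-valent tree structure.
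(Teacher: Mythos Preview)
The paper does not actually prove this theorem: it sits in the preliminary chapter, which the author explicitly says ``consists of material which is well known, so we will omit the majority of proofs,'' and indeed no argument follows the statement. So there is nothing in the paper to compare your proposal against; one can only assess the proposal on its own.

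Your part (i) is fine and complete: the edge–vector identity $w_{i,j}+w_{k,l}=w_{i,k}+w_{j,l}$ (resp.\ $w_{i,l}+w_{j,k}=w_{i,k}+w_{j,l}$) follows exactly as you wrote from the decomposition around the branching vertex $v$, and then the binomial lies in the toric ideal by the standard description of $I(\Xaff)$.

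Part (ii), however, is only a plan, and the gap you flag at the end is a real one, not a formality. Concretely, your step ``pick $w_{a,c},w_{b,d}\in P_\beta$ and swap to produce $w_{a,b}$'' can fail in two ways. First, you may get $c=d$, so $\{a,b,c,d\}$ has only three elements and no $W_1,W_2$ relation applies at all; in that situation $w_{a,c}+w_{b,c}$ has a \emph{unique} factorization into two paths, so no single swap can create $w_{a,b}$. Second, even with four distinct leaves, the pairing $\{a,c\},\{b,d\}$ may be precisely the non-intersecting one among the three pairings of $\{a,b,c,d\}$ (Lemma~\ref{paths_intersection} guarantees exactly one such), and then $w_{a,c}+w_{b,d}\neq w_{a,b}+w_{c,d}$ in $\S$, so again no swap produces $w_{a,b}$. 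Your closing paragraph acknowledges this but does not resolve it: saying ``an appropriate preliminary swap through the crossing pair'' is not yet an argument, because a single auxiliary swap on $P_\beta$ need not improve the situation, and you have given no invariant that decreases.

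Two standard ways to close the gap: (a) show directly that the $W_1,W_2$ form a Gr\"obner basis of the toric ideal with respect to the term order induced by the weights $d(i,j)$ used in the degeneration (this is essentially the content of the reference \cite{Deg} and is why the paper treats the result as known); or (b) run an induction on the number of leaves by contracting a cherry, exactly parallel to the paper's proof of Proposition~\ref{decomposition}, reducing any binomial relation in $\tt_{n+1}$ to one in $\tt_n$ modulo relations involving the two new leaves. Either route replaces the ad hoc ``pick $c,d$ and hope'' step with a structured descent.
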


We introduce the action of the torus $\ntor$ on the variety $\Xtt$ in the same way as on the variety $\gg$. Let $\sigma_t = (t_1,t_2,\ldots, t_n) \in \ntor$ act on coordinates in the following way:
     \begin{eqnarray*}
         \sigma_t(x_{i,j}) = t_i t_j x_{i,j}.
     \end{eqnarray*}
The action is well defined on the quotient ring $\cc[\Xtt]$, because generators of the ideal consist of monomials which belong to the same gradation.\\

The action of the torus defines a natural $\zz^n$-gradation on $\Xtt$ in the same way as on $\gg$. An element $f \in \cc[\Xtt]$ belongs to a gradation $\lambda \in \zz^n$ if: 
	\begin{eqnarray*}
		\sigma_t(f) = t^{\lambda}f
	\end{eqnarray*}
Note that this action of the torus is not faithful. 

The gradation of the algebra can be expressed in the combinatorial language of toric varieties. 
Let $\pi : \zz^{2n-3} \to \zz^{n}$ be a projection from the space spanned by the edges of the tree to the space spanned by the edges which are incident to leaves. In other words, $\pi$ forgets about inner coordinates of the tree.

Recall that elements of $\S$ may be treated as monomials in $\cc[\Xaff]$. 
\begin{lem} \label{grad} An element $a \in \S$, treated as monomial in $\cc[\S]$, lies in the gradation $\pi(a)$. \end{lem}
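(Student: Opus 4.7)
My plan is to verify the claim on the generators $w_{i,j}$ of $\S$ and then extend it by additivity, using that $\pi$ is $\zz$-linear and that monomials in $\cc[\S]$ behave multiplicatively under the torus action.

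First, I would identify the monomial corresponding to a generator. Under the isomorphism $\cc[\Xaff] \cong \cc[\S]$ provided by Proposition~\ref{tori_semigroup}, the coordinate $x_{i,j}$ corresponds to the character $\chi^{w_{i,j}}$; by the definition of $\sigma_t$ we have $\sigma_t(x_{i,j}) = t_i t_j \, x_{i,j}$, so this monomial lies in gradation $e_i + e_j$. On the combinatorial side, $w_{i,j}$ is the indicator vector of the unique path from leaf $i$ to leaf $j$, and the only edges on this path that are incident to a leaf are the two edges adjacent to $i$ and to $j$ (note that no two leaves are adjacent in a $3$-valent tree with $n\geq 3$ leaves); since $\pi$ retains exactly the leaf-edge coordinates, we obtain $\pi(w_{i,j}) = e_i + e_j$. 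The two sides therefore agree on every generator.

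For an arbitrary element $a \in \S$, I pick any decomposition $a = \sum_{i<j} a_{i,j}\, w_{i,j}$ with $a_{i,j} \in \nn$; such a decomposition exists by the very definition of $\S$. The associated monomial is
\begin{eqnarray*}
	\chi^a \;=\; \prod_{i<j}\bigl(\chi^{w_{i,j}}\bigr)^{a_{i,j}} \;=\; \prod_{i<j} x_{i,j}^{a_{i,j}},
\end{eqnarray*}
so $\sigma_t$ multiplies $\chi^a$ by the scalar $\prod_{i<j}(t_i t_j)^{a_{i,j}} = t^{\sum_{i<j} a_{i,j}(e_i + e_j)}$. Combining with the $\zz$-linearity of $\pi$ applied to the generator computation, this exponent equals $\pi(a)$, which is exactly what is to be proved.

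The only delicate point is that the decomposition $a = \sum a_{i,j} w_{i,j}$ is not unique, as the paragraph just before the lemma explicitly emphasises. However, both the monomial $\chi^a$ and the vector $\pi(a)$ depend only on $a$ itself and not on the decomposition: the former because $\chi^a$ is intrinsic to $\cc[\S]$, and the latter because $\pi$ is a well-defined $\zz$-linear map on the ambient lattice $\zz^{2n-3}$. Hence consistency across decompositions is automatic, and no genuine obstacle arises beyond correctly tracking the identification $w_{i,j} \leftrightarrow x_{i,j}$.
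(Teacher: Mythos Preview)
Your proof is correct. The paper does not supply a formal proof of this lemma at all---it only follows the statement with an informal sentence (``Informally, it must be shown that the gradation of the semigroup is determined by the values corresponding to the edges which are incident to leaves\ldots''). Your argument---checking $\pi(w_{i,j}) = e_i + e_j$ on generators via the observation that a path between leaves meets no other leaf-edge, and then extending additively using the linearity of $\pi$ and the multiplicativity of the $\ntor$-action on monomials---is exactly the natural way to make that remark rigorous, and your handling of the non-uniqueness of decompositions is appropriate.
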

	Informally, it must be shown that the gradation of the semigroup is determined by the values corresponding to the edges which are incident to leaves. To a gradation $\lambda \in \zz^n$ belong elements of the semigroup having value $\lambda_i$ on the edge incident to the $i$th leaf. 
	
	For example element $[1, 1, 2, 1, 1]$ lies in gradation $[1, 1, 1, 1]$ (see Figure ($\ref{fig:graphic_tree}$)).

\subsection{Degeneration of the Grassmannian to a variety described by a tree}
A proof of the following theorem (stated in a much more general situation) may be found in ($\cite{Deg}$, theorems 10.6, 5.2, 5.3). We present a simple proof for convenience of the reader. 
\begin{theorem} $\gg$ degenerates to $\Xtt$ for every tree $\tt$. That is, there exists a variety in $\pp_{\cc}^{n(n-1)/2-1} \times \cc$ such that in the projection to $\cc$, the fiber over an arbitrary nonzero point is equal to $\gg$ and the fiber over zero is $\Xtt$. \label{degeneration} \end{theorem}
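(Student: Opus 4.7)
The plan is to exhibit the degeneration as a weight-based one-parameter family, with weights dictated by the tree-distance inequalities of Corollary~\ref{cor_inters}.

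First I would assign to each Plücker coordinate $x_{i,j}$ the weight $\omega(x_{i,j}) := d(i,j)$, the tree distance between the $i$-th and $j$-th leaves of $\tt$. Setting
\begin{eqnarray*}
M = M(i,j,k,l) := \max\{d(i,j)+d(k,l),\ d(i,k)+d(j,l),\ d(i,l)+d(j,k)\},
\end{eqnarray*}
I would replace each Plücker relation $W(i,j,k,l) = x_{i,j}x_{k,l} - x_{i,k}x_{j,l} + x_{i,l}x_{j,k}$ by its $t$-homogenization
\begin{eqnarray*}
\widetilde{W}(i,j,k,l;t) &=& t^{M - d(i,j)-d(k,l)} x_{i,j}x_{k,l} - t^{M - d(i,k)-d(j,l)} x_{i,k}x_{j,l} \\
&& +\ t^{M - d(i,l)-d(j,k)} x_{i,l}x_{j,k},
\end{eqnarray*}
and let $\mathcal{F} \subset \pp_\cc^{n(n-1)/2-1} \times \cc$ be the closed subvariety cut out by all such polynomials, equipped with the projection to $\cc$ on the second factor.

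To identify the generic fiber, for any $t_0 \neq 0$ I would apply the diagonal substitution $x_{i,j} = t_0^{d(i,j)} y_{i,j}$, which is a projective isomorphism since every $t_0^{d(i,j)}$ is nonzero. A direct computation turns $\widetilde{W}(i,j,k,l;t_0)$ into $t_0^{M}\, W(i,j,k,l)$ expressed in the new variables $y_{i,j}$, so $\mathcal{F}_{t_0}$ is identified with $\gg$. For the special fiber, Corollary~\ref{cor_inters} guarantees that exactly two of the three exponents appearing in $\widetilde{W}(i,j,k,l;t)$ vanish while the third is strictly positive; specialising to $t=0$ therefore yields precisely $W_1(i,j,k,l)$ when $w_{i,j}$ meets $w_{k,l}$ and precisely $W_2(i,j,k,l)$ when $w_{i,l}$ meets $w_{j,k}$. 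By the previous theorem these binomials generate the ideal of $\Xaff$, so at the set-theoretic level $\mathcal{F}_0 = \Xtt$.

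The main obstacle will be upgrading this identification from set-theoretic to scheme-theoretic, equivalently verifying that $\mathcal{F} \to \cc$ is flat at $t=0$ and that no extraneous relation sneaks into the zero fibre. I would handle this via a dimension comparison: $\gg$ has dimension $2(n-2) = 2n-4$, and a short combinatorial check shows that the leaf-to-leaf paths generate a full-rank sublattice of $\zz^{2n-3}$, yielding $\dim \Xaff = 2n-3$ and hence $\dim \Xtt = 2n-4 = \dim \gg$. Equidimensionality of the fibres, combined with the irreducibility of both $\gg$ and $\Xtt$ (the latter is a torus-orbit closure), then forces flatness of the family over $\cc$, and therefore the scheme-theoretic equality $\mathcal{F}_0 = \Xtt$.
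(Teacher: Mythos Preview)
Your construction is exactly the paper's: both assign weight $d(i,j)$ to $x_{i,j}$, homogenize each Pl\"ucker relation with respect to $t$, and then invoke Corollary~\ref{cor_inters} to see that at $t=0$ the surviving binomial is $W_1$ or $W_2$. The paper phrases this as applying the $\cc^*$-action $t(x_{i,j})=t^{-d(i,j)}x_{i,j}$ and clearing denominators, which amounts to the same family $\mathcal{F}$ you wrote down.

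One remark: your ``main obstacle'' is not actually an obstacle here. Since the preceding theorem says the $W_1,W_2$ \emph{generate the ideal} of $\Xaff$ (not merely cut it out set-theoretically), and the scheme-theoretic fibre $\mathcal{F}_0$ is by definition cut out by $\{\widetilde{W}(i,j,k,l;0)\}=\{W_1,W_2\}$, the equality $\mathcal{F}_0=\Xtt$ is already scheme-theoretic with no further work. Flatness of $\mathcal{F}\to\cc$ is a separate issue---it is what the \emph{next} theorem needs to compare Hilbert series, and the paper simply asserts it there; your equidimensionality sketch is a reasonable gesture toward it, though as stated it is not quite a proof (equal fibre dimensions alone do not force flatness without, e.g., also knowing $\mathcal{F}$ is reduced and irreducible, which one gets by taking the closure of $\gg\times\cc^*$).
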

\begin{proof}
	Let us recall that $d(i,j)$ is a length of $w_{i,j}$. Consider the following action of the torus $\cc^{*}$ on $\pp_{\cc}^{n(n-1)/2-1}$:
	\begin{eqnarray*}
		t(x_{i,j}) = t^{-d(i,j)}x_{i,j}
	\end{eqnarray*}
	We define on $\pp_{\cc}^{n(n-1)/2-1} \times \cc^*$ a variety $V'$ given by equations:
	\begin{eqnarray}
		t(x_{i,j})t(x_{k,l}) - t(x_{i,k})t(x_{j,l}) + t(x_{i,l})t(x_{j,k}) = 0 \hbox{ for } 1 \leq i < j < k < l \leq n
		\label{big_var_eq}
	\end{eqnarray}
	The map $(x_{i,j}, t) \to (t(x_{i,j}), t)$ induces an isomorphism over $\cc^*$ from $V'$ to $\gg \times \cc^*$. Therefore the fiber over an arbitrary point of the projection from $v'$ to $\cc^*$ is isomorphic to $\gg$.\\

	We would like to extend $V'$ to $\pp_{\cc}^{n(n-1)/2-1} \times \cc$. Let $V \subset \pp_{\cc}^{n(n-1)/2-1} \times \cc$ be given by the following equations:
	\begin{eqnarray*}
		&& x_{i,j}x_{k,l} - x_{i,k}x_{j,l} + t^{d(i,k)+d(j,l)-d(i,l)-d(j,k)}x_{i,l}x_{j,k} = 0 \mbox { if } w_{i,j} \mbox { intersects } w_{k,l} \\
		&& t^{d(i,l)+d(j,k)-d(i,j)-d(k,l)}x_{i,j}x_{k,l} - x_{i,k}x_{j,l} + x_{i,l}x_{j,k} = 0 \mbox { if } w_{i,l} \mbox { intersects } w_{j,k}
	\end{eqnarray*}
	To receive these equations we multiplied ($\ref{big_var_eq}$) by the required power of $t$ (we are using ($\ref{cor_inters}$)).\\ 
	
	We see that $V = V'$ for $t \neq 0$. Additionally, the fiber over the zero of the projection from $V$ to $\cc$ is isomorphic to $\Xtt$ (let us substitute $t=0$ in the equations mentioned above - the corollary ($\ref{cor_inters}$) implies that the exponent of the power of $t$ is positive). $V$ is the variety describing degeneration which we were looking for.
\end{proof}

\begin{theorem} The Poincare-Hilbert series for $\gg$ and $\Xtt$ (with respect to the action of a torus $\ntor$ described above) are the same.  
\end{theorem}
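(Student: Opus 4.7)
The plan is to deduce the equality of multigraded Hilbert series from the flat degeneration constructed in the previous theorem. Let $V \subset \pp_\cc^{n(n-1)/2-1} \times \cc$ be that family with projection $\pi : V \to \cc$, and write $R = \bigoplus_\lambda R_\lambda$ for its multihomogeneous coordinate ring, $\zz^n$-graded by the $\ntor$-action.

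First I would verify that the $\ntor$-action on $\pp_\cc^{n(n-1)/2-1}$, acting trivially on the $\cc$-factor, extends to $V$. The coordinate $x_{i,j}$ carries $\ntor$-weight $t_it_j$, and each defining equation of $V$ is a sum of monomials of the common weight $t_it_jt_kt_l$; the auxiliary powers of $t$ from the base $\cc$ are $\ntor$-invariant scalars. Hence the ideal of $V$ is $\ntor$-homogeneous, the action restricts to $V$, and it preserves the fibres of $\pi$.

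Second, I would invoke flatness of $\pi$, which is the content of Theorem 10.6 of \cite{Deg} on which the previous theorem relies. Each $R_\lambda$ is a finitely generated $\cc[t]$-module; since $R = \bigoplus_\lambda R_\lambda$ is flat over $\cc[t]$ and tensor product commutes with direct sums, each summand $R_\lambda$ is also $\cc[t]$-flat, hence free (as a finitely generated module over the PID $\cc[t]$). Consequently $\dim_\cc R_\lambda/(t - t_0)R_\lambda$ does not depend on $t_0 \in \cc$.

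Specialization to $t = 1$ recovers the $\lambda$-graded piece of $\cc[\gg]$, and specialization to $t = 0$ recovers the $\lambda$-graded piece of $\cc[\Xtt]$, so the two Poincare--Hilbert series coincide termwise. The subtle ingredient is flatness of $\pi$; were one to produce a proof independent of \cite{Deg}, the hard part would be to exhibit, for each $\lambda$, a $\cc$-basis of $\cc[\Xtt]_\lambda$ that lifts to a $\cc[t]$-basis of $R_\lambda$. Corollary \ref{cor_inters} is what makes such a lift plausible, since every specialized Pl\"ucker relation differs from the original only by terms of strictly positive $t$-weight, so monomial reductions performed in $\cc[\Xtt]$ can be mirrored in $R$ modulo higher-order corrections in $t$.
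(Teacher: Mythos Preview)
Your argument is correct and, in fact, a bit more direct than the paper's. Both proofs rest on flatness of the family $V\to\cc$ supplied by \cite{Deg}, but the paper routes the conclusion through sheaf cohomology: it first appeals to \cite{Ha}~III.9.9 for constancy of the ordinary ($\zz$-graded) Hilbert function, then invokes Grauert's theorem (\cite{Ha}~III.12.9) to see that each $\pi_*\oo_V(d)$ is locally free on $\cc$, and finally decomposes these locally free sheaves into $\ntor$-weight subbundles to recover the multigraded statement. You bypass the cohomological machinery entirely by working with the multigraded coordinate ring $R$ over $\cc[t]$: the direct-summand observation that each $R_\lambda$ is flat, hence free over the PID $\cc[t]$, is more elementary and already adapted to the $\zz^n$-grading, so no separate passage from the single to the multigraded picture is needed. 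The paper's route, on the other hand, has the advantage that it speaks only about the projective fibres and the sheaves $\oo(d)$, so it is insensitive to saturation issues in the defining ideal.

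One point worth tightening: flatness of the projective morphism $\pi$ is not literally the same as flatness of the graded $\cc[t]$-algebra $R$ (the latter is a statement about the affine cone). What your argument actually uses---and what the Gr\"obner/SAGBI degeneration theorems in \cite{Deg} in fact establish---is that $I_V$ is $t$-saturated, equivalently that $R$ is $\cc[t]$-torsion-free. With this in hand, your specializations $R/(t-1)\cong\cc[\gg]$ and $R/(t)\cong\cc[\Xtt]$ go through, the latter using the paper's assertion that the binomials $W_1,W_2$ generate the ideal of $\Xaff$.
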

The proof of this theorem is identical to ($\cite{Wis}$, 2.35).
\begin{proof}
Let us consider the standard action of the torus $\cc^*$ (with a weight one on each variable) on the varieties $\gg$ and $\Xtt$. The action extends uniquely to $V$ (notions come from ($\ref{degeneration}$)). The projection of $\cc$-varieties from $V$ to $\cc$ is flat. The projective variety $V$ is parametrized by $\cc$. The theorem ($\cite{Ha}$, III.9.9) about preserving Hilbert series for flat families implies that the Hilbert series (for the action of the torus $\cc^*$) of fibers of $V$ over $\cc$ are the same.

	Note that the dimension of the gradation $t \in \zz$ for $\gg$ (or $\Xtt$) is equal to $h^0(O_{\gg}(t))$ (respectively $h^0(O_{\Xtt}(t))$). It implies that the Hilbert series of fibers are the same and we can apply Grauert theorem ($\cite{Ha}$, III.12.9) for a zeroth derived functor. The sheaf $\oo(t)$ pushed on $\cc$ by projection from $V$ is locally constant and stalks of its zero cohomologies correspond to zero cohomologies of the sheaf $\oo(t)$ on fibers. Therefore zero cohomologies of fibers are locally the same and the eigenspaces of the action of the torus $\ntor$ (we use the fact that the action agrees with the action of $\cc^*$) on fibers are also locally the same. 
\end{proof}

\begin{cor} The Poincare-Hilbert series for $\Xtt$ is independent of the choice of 3-valent tree $\tt$ and is symmetric with respect to the variables $z_1,\ldots,z_n$. \label{hilbert_independence} \end{cor}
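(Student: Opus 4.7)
The plan is to deduce both statements from the preceding theorem together with the manifest $S_n$-symmetry of the Grassmannian.

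For independence of $\tt$: by the previous theorem the Poincare-Hilbert series of $\Xtt$ equals that of $\cc[\gg]$, and the latter makes no reference to a tree. Hence for any two 3-valent trees $\tt,\tt'$ with $n$ leaves one has $W(\cc[\Xtt])=W(\cc[\gg])=W(\cc[X(\tt')])$. This is essentially a one-line observation.

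For symmetry: I would exploit the natural $S_n$-action on $\gg$ coming from permuting the basis of the underlying $n$-dimensional vector space. Concretely, extend the Plücker coordinates antisymmetrically by $x_{i,j}=-x_{j,i}$ and $x_{i,i}=0$; then each $\pi\in S_n$ induces a linear automorphism $\phi_\pi$ of $\cc[x_{i,j}]$ sending $x_{i,j}$ to $x_{\pi(i),\pi(j)}$. A direct check shows that $\phi_\pi$ sends each Plücker relation to a signed Plücker relation and so preserves the ideal $I$, hence descends to an algebra automorphism of $\cc[\gg]$. Moreover
\begin{eqnarray*}
 \phi_\pi\circ\sigma_t=\sigma_{\pi(t)}\circ\phi_\pi,\qquad (\pi(t))_i:=t_{\pi^{-1}(i)},
\end{eqnarray*}
which is immediate from $\sigma_t(x_{i,j})=t_it_jx_{i,j}$. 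Consequently $\phi_\pi$ maps the weight-$\lambda$ eigenspace isomorphically onto the weight-$\pi(\lambda)$ eigenspace, so their dimensions agree. This gives symmetry of $W(\cc[\gg])$ in $z_1,\ldots,z_n$, and by the preceding theorem the same holds for $W(\cc[\Xtt])$.

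The only delicate point is the sign bookkeeping needed to verify that $\phi_\pi$ preserves the Plücker ideal after the antisymmetric extension of the coordinates; it is routine — a transposition of two indices in a Plücker relation flips its overall sign — but has to be carried out once. Everything else in the argument is formal, and the tree $\tt$ plays no role beyond invoking the prior theorem.
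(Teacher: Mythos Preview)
Your proof is correct. For independence you and the paper argue identically. For symmetry the paper takes a different, shorter route: instead of invoking the explicit $S_n$-action on $\gg$, it observes that relabeling the leaves of $\tt$ by a permutation $\pi$ produces another 3-valent tree $\tt'$ whose Hilbert series is $W(\cc[\Xtt])$ with the variables permuted by $\pi$; since the independence just established gives $W(\cc[X(\tt')])=W(\cc[\Xtt])$, symmetry follows immediately. Your approach via the $S_n$-equivariance of the Pl\"ucker ideal is more self-contained---it does not need the independence clause to deduce symmetry---at the cost of the routine sign bookkeeping you flag. The paper's one-line argument is more economical but tacitly assumes the degeneration theorem applies to trees with arbitrarily relabeled leaves, not only those numbered anticlockwise as in the paper's standing convention; your argument sidesteps that point entirely.
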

\begin{proof} The independence follows straightforwardly from the theorem above. The series is symmetric, since it does not change if edges of $\tt$ are relabeled. \end{proof}

\section{The methods of calculating the Hilbert series}

\subsection{A semigroup of a variety described by a tree} 
We will consider only paths which are the shortest paths between leaves (of the form $w_{i,j}$ for leaves $i<j$).

We introduce the notion of an ``ordered intersection of paths'' which will be used to single out a particular type of decomposition into sums of paths of $\S$ elements.\\

In this section all pairs will be ordered, i.e. for $(i,j)$ it holds that $i<j$. We compare two ordered pairs lexicographically. The ordering on pairs induces the ordering on paths:
	\begin{eqnarray*} w_{i,j} < w_{k,l} \mbox { iff } (i,j) < (k,l) \mbox { where } i<j \mbox { and } k<l \end{eqnarray*}

Subsequently, it induces the lexicographic ordering on ordered pairs of paths. For paths $w_1 < w_2$ and $w'_1 < w'_2$
	\begin{eqnarray*} (w_1,w_2) < (w'_1, w'_2) \mbox { iff } w_1 < w'_1 \mbox { or } w_1 = w'_1 \mbox { and } w_2 < w'_2 \end{eqnarray*}

We will introduce a notion of duality. Consider an ordered pair of intersecting paths $(w_{a,b}, w_{c,d})$ without common endpoints, where $a<b$, $c<d$ and $w_{a,b} < w_{c,d}$. We can divide four leaves $a,b,c,d$ into two pairs in exactly three ways. For each division we connect each pair of leaves by a path. The lemma ($\ref{paths_intersection}$) implies that in two out of three cases, the paths will intersect (exactly for a pair $(w_{a,b},w_{c,d})$ and for some ordered pair $(w_{a',b'},w_{c',d'})$), and in one case they won't. We will call a pair of paths $(w_{a',b'},w_{c',d'})$ a dual to a pair $(w_{a,b}, w_{c,d})$. A dual pair is constructed from a pair of paths by exchanging two endpoints in a such a way that the paths still intersect. Note that $w_{a,b} + w_{c,d} = w_{a',b'} + w_{c',d'}$ (see Figure $\ref{fig:graphics_tree_intersection}$).

In other words a dual to our pair $(w_{a,b},w_{c,d})$ is an ordered pair of intersecting paths $(w_{a',b'},w_{c',d'})$ ($a'<b'$, $c'<d'$, $w_{a',b'} < w_{c',d'}$) such that $\{a',b',c',d'\} = \{a,b,c,d\}$. 

Observe that a dual pair is uniquely determined and the notion of duality is symmetric.\\

\begin{defi} Let us consider two intersecting paths $w_1 \in \zz^{2n-3}$ and $w_2 \in \zz^{2n-3}$ without common endpoints. Let $(w'_1, w'_2)$ be a dual pair to $(w_1,w_2)$. We say that $w_1$ intersects $w_2$ in an ordered way if $(w_1,w_2) < (w'_1,w'_2)$. Otherwise we say that they intersect in an unordered way. \end{defi}
It is clear from the definition that the paths $w_1,w_2$ intersect in an ordered way if and only if the paths $w'_1,w'_2$ which are dual to $(w_1,w_2)$ intersect in an unordered way.

In the case when two paths have the same endpoint, which implies that they intersect, we will say that the paths intersect in an ordered way.

For instance, in Figure ($\ref{fig:graphic_tree}$) paths in the first tree intersect in an unordered way. After exchanging the endpoints of this pair of paths, we get the situation in the second tree, in which paths intersect in an ordered way. 

\begin{prop}
	Each $x \in \S \subset \zz^{2n-3}$ decomposes into a sum of paths $w_{i,j}$ such that no two paths in this decomposition intersect in an unordered way.  \label{decomposition_exist}
\end{prop}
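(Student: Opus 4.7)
The plan is to proceed by a monovariant argument: start from any decomposition of $x$ into paths (which exists by the definition of $\S = \nn \mathcal{A_{\tt}}$) and repeatedly swap unordered intersecting pairs with their (ordered) duals until none remain. The first observation is that every decomposition of $x$ involves the same number $N$ of paths, since $2N = \sum_{i=1}^{n} x_{e_i}$ where $e_i$ is the edge at leaf $i$ (each path contributes $1$ to each of its two leaf-edges). So every decomposition of $x$ is a multiset of exactly $N$ elements of the finite set $\mathcal{A_{\tt}}$, and there are only finitely many such multisets.

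Given a pair $w_1 < w_2$ in the decomposition that intersects in an unordered way, replace it by its dual $(w'_1, w'_2)$, with $w'_1 < w'_2$, satisfying $w_1 + w_2 = w'_1 + w'_2$. The four paths $w_1, w_2, w'_1, w'_2$ are pairwise distinct because a dual pair arises from a different partition of the same four endpoints into two pairs; combined with the definition $(w'_1, w'_2) < (w_1, w_2)$ of unorderedness, this forces $w'_1 < w_1$ strictly.

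The monovariant is the sorted (increasing) sequence of paths in the multiset, compared lexicographically. To show that each swap strictly decreases this value, I track, for every path $t$, the quantity $n_t$ equal to the number of paths in the decomposition that are $\le t$. A direct case analysis shows $n_t$ is preserved for $t < w'_1$, strictly increased for $w'_1 \le t < w_1$, weakly increased for $w_1 \le t < w_2$, and returns to its old value for $t \ge w_2$. Translating these counts back into the sorted sequence: all positions occupied by paths strictly less than $w'_1$ remain unchanged, while at the very next position the new sequence contains a path $\le w'_1$ whereas the old sequence had a path $> w'_1$. This yields the desired strict lexicographic decrease.

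Finiteness of the state space then forces the swapping procedure to terminate, and a terminal decomposition has no two paths intersecting in an unordered way, which is the conclusion of the proposition. The only delicate point is the verification that a single swap strictly decreases the sorted lex order; once that bookkeeping is established, termination is immediate.
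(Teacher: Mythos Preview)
Your argument is essentially the paper's: both use the lexicographic order on sorted decompositions together with the swap operation; the paper simply picks a lex-minimal decomposition and derives a contradiction, while you run the swapping process and invoke finiteness of the state space---two sides of the same coin. Your explicit remark that every decomposition of $x$ has the same length $N$ (hence finitely many decompositions exist) is a nice point the paper leaves implicit.

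Two small slips in your bookkeeping. First, the claim that $n_t$ ``returns to its old value for $t \ge w_2$'' is false in general: for leaves $i<j<k<l$, the unordered pair $(w_{i,k},w_{j,l})$ has dual $(w_{i,j},w_{k,l})$, and then $w'_2=w_{k,l}>w_{j,l}=w_2$. Second, at the position just after the paths strictly below $w'_1$ the old sorted sequence may already contain a copy of $w'_1$ coming from the unchanged part of the multiset, not something strictly greater. Neither slip affects the conclusion: the only facts you actually need are that $n_t$ is unchanged for $t<w'_1$ and strictly larger at $t=w'_1$, and these force the two sorted sequences to agree through all copies of $w'_1$ already present in the old multiset and then to differ at the next position, where the new entry is $w'_1$ and the old entry is strictly greater.
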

\begin{proof}
	Let us choose the smallest lexicographical decomposition of $x$ into a sum of paths - we treat a sum of paths as ordered sequence of summands and compare lexicographically. We order paths in the way stated at the beginning of the section. 

	Suppose that two paths $w$ and $w'$ in this decomposition intersect in an unordered way. Let $(v,v')$ be the dual pair (constructed from $w,w'$ by replacing endpoints). After replacing the paths $w$ and $w'$ by $v$ and $v'$ in the decomposition of $x$, we still obtain a decomposition of $x$ since $w+w'$ = $v+v'$. This new decomposition is lexicographically smaller since the definition of an unordered intersection implies that: $(v,v') < (w,w')$. 
\end{proof}

\begin{prop}
	\label{decomposition}
	Let $x \in \S \subset \zz^{2n-3}$. Then $x$ has a unique decomposition into a sum of paths $w_{i,j}$ such that no two paths in the decomposition intersect in an unordered way.
\end{prop}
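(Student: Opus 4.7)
My plan is to prove uniqueness by induction on $n$, the number of leaves of $\tt$, by detaching the leaf $a$ and reducing to two smaller subtrees. The cases $n=2$ and $n=3$ are handled directly: for $n=2$ the semigroup $\S$ is $\nn$, generated by a single path; for $n=3$ any two distinct paths share a leaf, so every decomposition of $x$ is automatically ordered, and the multiplicities of $w_{1,2}, w_{1,3}, w_{2,3}$ are recovered from the three leaf-edge values of $x$ by solving an elementary $3\times 3$ linear system.

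For the inductive step I fix a leaf $a$ of $\tt$, let $v$ be its neighbouring trivalent vertex with the remaining two edges $e_L, e_R$, and let $T_L, T_R$ be the resulting subtrees with leaf sets $L_L, L_R$. I form smaller trees $\tt_L = T_L \cup \{e_L, v\}$ and $\tt_R = T_R \cup \{e_R, v\}$, each with at most $n-1$ leaves, declaring $v$ to be the \emph{largest} leaf of $\tt_L$ (and of $\tt_R$). Any ordered decomposition $D$ of $x$ restricts naturally to $\tt_L$: paths internal to $T_L$ are unchanged, paths $w_{a,l}$ with $l \in L_L$ and crossings $w_{l,r}$ with $l \in L_L$, $r \in L_R$ both become $w_{v,l}$, and all remaining paths restrict to $0$. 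The main technical step is to check that $D|_{\tt_L}$ is again ordered in $\tt_L$: this reduces to a finite case analysis on pairs of paths, and the duality of intersecting pairs together with the lexicographic comparison is preserved under the identification $a \leftrightarrow v$ precisely because declaring $v$ largest in $\tt_L$ compensates for the reversal of endpoint comparisons relative to $a$ being smallest in $\tt$.

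Granted this, the inductive hypothesis applied to $\tt_L$ and $\tt_R$ forces $D_1|_{\tt_L} = D_2|_{\tt_L}$ and $D_1|_{\tt_R} = D_2|_{\tt_R}$; this pins down every path internal to $T_L$ or $T_R$ and, for each $l \in L_L$, the total $\#\{w_{a,l} \in D_i\} + \sum_{r \in L_R}\#\{w_{l,r} \in D_i\}$ (and analogously for $L_R$). To separate $a$-paths from crossings I exploit ordering constraints inside $\tt$ that are invisible to the subtrees: a direct duality computation via Corollary \ref{cor_inters} shows that in any ordered decomposition, whenever $w_{a,l_1}$ and $w_{l_2,r}$ both appear one must have $l_1 \leq l_2$, and whenever two crossings $w_{l_1,r_1}, w_{l_2,r_2}$ with $l_1 < l_2$ both appear one must have $r_1 \leq r_2$ (and symmetrically on the $L_R$ side). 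The numbers $\alpha, \beta, \gamma$ of $a$-to-$L_L$, $a$-to-$L_R$, and crossing paths are determined by $x$ through $\alpha = (x_{e_a}+x_{e_L}-x_{e_R})/2$, and the analogous formulas for $\beta, \gamma$, obtained by counting uses of the three edges at $v$. The first constraint then forces the $L_L$-endpoints of $a$-paths to be the $\alpha$ lex-smallest elements of the multiset already fixed above, the symmetric statement pins down the $a$-to-$L_R$ endpoints, and the crossing constraint forces the $\gamma$ crossings to be paired in the unique monotone (non-crossing) matching between the already-determined $L_L$ and $L_R$ endpoint multisets. Thus every ingredient of $D_i$ is determined by $x$, and $D_1 = D_2$. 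The expected main obstacle is the verification that restriction preserves the ordered property, since the duality and lex comparisons must be checked for several configurations of four endpoints straddling the decomposition $\{a\} \cup T_L \cup T_R$.
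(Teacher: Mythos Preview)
Your inductive scheme differs from the paper's: the paper removes \emph{two} leaves $l_1<l_2$ sharing a common trivalent neighbour $v$, obtaining a single tree $\tt'$ with $n-1$ leaves in which $v$ sits in the cyclic position formerly occupied by $l_1,l_2$; it then only has to decide, for each path ending at $v$ in $\tt'$, whether to extend it to $l_1$ or to $l_2$. Your approach removes a single leaf $a$ and splits at its neighbour $v$ into two subtrees, which forces $v$ in $\tt_L$ to absorb \emph{both} $a$-paths and crossing paths.

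This is where your argument breaks. The claim that $D|_{\tt_L}$ is again ordered is false, and no choice of position for $v$ in the leaf order of $\tt_L$ can repair it. Take $n=6$, $a=1$, $L_L=\{2,3,4\}$, $L_R=\{5,6\}$, with $T_L$ shaped so that the neighbour $v_L$ of $v$ in $T_L$ is adjacent to leaf $4$ and to a vertex $v_L'$ carrying leaves $2,3$. Then $(w_{1,2},w_{3,4})$ is an ordered pair in $\tt$ (dual $(w_{1,3},w_{2,4})$ is lex-larger), and $(w_{2,4},w_{3,5})$ is also ordered in $\tt$ (dual $(w_{2,5},w_{3,4})$). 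One checks that the four paths $w_{1,2},w_{3,4},w_{2,4},w_{3,5}$ are pairwise ordered, so they can all occur in an ordered decomposition $D$. Restricting to $\tt_L$ sends $w_{1,2}\mapsto w_{2,v}$ and $w_{3,5}\mapsto w_{3,v}$. If $v$ is declared \emph{largest} in $\tt_L$ then $(w_{2,v},w_{3,4})$ has dual $(w_{2,4},w_{3,v})$, which is lex-smaller, so the restricted pair is unordered. If instead $v$ is declared \emph{smallest}, then $(w_{v,3},w_{2,4})$ has dual $(w_{v,2},w_{3,4})$, again lex-smaller, so that pair is unordered. Either way $D|_{\tt_L}$ fails to be ordered, and the inductive hypothesis cannot be invoked.

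The underlying reason is exactly the conflict you gloss over: in $\tt_L$ the new leaf $v$ simultaneously replaces $a$ (the smallest leaf of $\tt$) for $a$-paths and replaces some $r\in L_R$ (larger than every element of $L_L$) for crossings, so no single cyclic position for $v$ can make both identifications order-preserving against internal $T_L$-paths. The paper's two-leaf contraction avoids this entirely, since $v$ inherits the common cyclic slot of the two removed consecutive leaves and every projected path keeps the relative position of its endpoints.
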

\begin{proof}
	If $n=3$, no pairs of paths intersect in an unordered way. Let
	\begin{eqnarray*}
		x = [x_1, x_2, x_3] = a_{1,2}w_{1,2} + a_{1,3}w_{1,3} + a_{2,3}w_{2,3}.
	\end{eqnarray*}
	where $x_i$ is the value on the $i$th edge. Then $a_{1,2},a_{2,3}, a_{1,3}$ are unique solutions of the system of three equations with three variables.
	\begin{eqnarray*}
		a_{1,2} = \frac{1}{2}(x_1 + x_2 - x_3) \hspace{0.5cm} a_{1,3} = \frac{1}{2}(x_1 + x_3 - x_2) \hspace{0.5cm} a_{2,3} = \frac{1}{2}(x_2 + x_3 - x_1) 
	\end{eqnarray*}

	Therefore the decomposition is unique.\\

	Let us assume that the decomposition is unique for trees having $n-1$ leaves. Let us choose a vertex $v$ incident to two leaves $l_1$ and $l_2$, $l_1 < l_2$.  Let $\tt'$ be the tree constructed from $\tt$ by erasing $l_1$ and $l_2$. Notice that $v$ is a leaf in $\tt'$. Let $p_n$ be a projection from the space $\zz^{2n-3}$ of edges of the tree $\tt$ to the space $\zz^{2n-5}$ of edges of the tree $\tt'$. Observe that if $x \in S(\tt)$ then $p_n(x) \in S(\tt')$. 
	
	Let $x = \sum_{i,j} a_{i,j} w_{i,j}$ be a decomposition into paths in which every two paths intersect in an ordered way ($a_{i,j} \in \nn$). Then $p_n(x) = \sum a_{i,j} p_n(w_{i,j})$. Note that $\{ p_n(w_{i,j})$ for $a_{i,j} \neq 0 \}$ is a set of paths in $\tt'$ in which every two paths intersect in an ordered way. By induction this decomposition of $p_n(x)$ is unique.

	Consequently, the paths in the decomposition of $x$ which are disjoint to $l_1$ and $l_2$ are uniquely determined. Paths passing through $v$ (except $l_1$ and $l_2$) are nearly uniquely determined, except that they may end in either $l_1$ or $l_2$.  
	
	Let $x_{l_1},x_{l_2}, x_{v}$ be the values of $x$ on three edges coming out of $v$, where $x_{l_1}$, $x_{l_2}$ are the values on the edges which are incident to $l_1$ and $l_2$ respectively. Then the number of edges between $l_1$ and $l_2$ is $(x_{l_1}+x_{l_2} - x_{v})/2$ similarly to the case $n=3$. So the number of paths leaving from $l_1$ and not entering $l_2$ (and so entering $l_3$) is $y_{l_1} := x_{l_1} -(x_{l_1}+x_{l_2} - x_{v})/2$. Analogously, the number of paths from $l_2$ which do not enter $l_1$ is $y_{l_2} := x_{l_2} -(x_{l_1}+x_{l_2} - x_{v})/2$. Clearly $y_{l_1} + y_{l_2} = x_v$.
	
	Paths starting in $l_1$ must end in leaves with smaller (or equal) numbers than paths starting in $l_2$, since otherwise we would have a pair of paths which intersect in an unordered way. Therefore paths which pass through $v$ are uniquely determined - under the lexicographic ordering the first $y_{l_1}$ paths which end in $v$ in the decomposition of $p_n(x)$ must be extended to paths which end in $l_1$ and the other $y_{l_2}$ must be extended to paths which end in $l_2$. 
\end{proof}

The two propositions mentioned above imply:
\begin{cor}
Elements of $\S$ are in bijection with such sums of paths in which no two paths intersect in an unordered way. \label{s_bijection}
\end{cor}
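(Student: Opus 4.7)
The corollary is essentially a combination of Propositions \ref{decomposition_exist} and \ref{decomposition}, so the plan is simply to assemble them into an explicit bijection. Let $\mathcal{D}$ denote the set of formal expressions $\sum_{i<j} a_{i,j} w_{i,j}$ with $a_{i,j} \in \nn$ such that whenever $a_{i,j}$ and $a_{k,l}$ are both strictly positive, the paths $w_{i,j}$ and $w_{k,l}$ do not intersect in an unordered way. Define the evaluation map $\Phi : \mathcal{D} \to \S$ by sending such a formal sum to the lattice element $\sum a_{i,j} w_{i,j} \in \zz^{2n-3}$. Since $\S = \nn\mathcal{A_{\tt}}$ is by definition the set of nonnegative integer combinations of the $w_{i,j}$, the image indeed lies in $\S$, so $\Phi$ is well-defined.

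Surjectivity of $\Phi$ is exactly the content of Proposition \ref{decomposition_exist}: every $x \in \S$ admits at least one decomposition satisfying the unordered-intersection-free condition, which gives a preimage in $\mathcal{D}$. Injectivity of $\Phi$ is exactly the content of Proposition \ref{decomposition}: two elements of $\mathcal{D}$ evaluating to the same $x \in \S$ would be two such decompositions of $x$, which must coincide. Together these establish the bijection, proving the corollary.

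There is no genuine obstacle here because all the combinatorial work has been done upstream, in the existence argument (repeated lexicographic dual swaps strictly decrease the sequence of summands and must terminate) and the uniqueness argument (induction on $n$, using that pruning two sibling leaves commutes with decomposition and that the lexicographic rule determines how the paths incident to the pruned vertex must be extended). The only thing worth emphasising is that the formulation of the corollary reinterprets \emph{existence} as surjectivity and \emph{uniqueness} as injectivity of one and the same map $\Phi$, so once both propositions are in hand the statement follows without any additional argument.
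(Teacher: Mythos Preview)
Your proposal is correct and matches the paper's own treatment exactly: the paper simply states that the corollary is implied by the two preceding propositions, and you have made the evaluation map $\Phi$ and the surjectivity/injectivity reading of those propositions explicit. There is nothing to add.
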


We have shown that the Hilbert series is independent of the choice of the tree $\tt$ (corollary $(\ref{hilbert_independence})$). We will consider trees $\tt_{n+1}$ of the following form:
\begin{figure}[here] \centering \includegraphics[width=10cm]{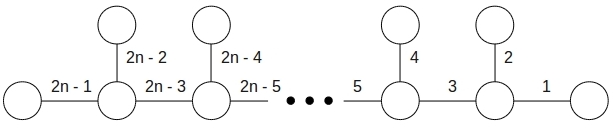} \caption{Tree $\tt_{n+1}$} \label{fig:simple_tree} \end{figure}\\
The leaves are numbered from $1$ to $n+1$ anticlockwise - from right to left. 

\subsection{A combinatorial interpretation of dimensions of torus action's eigenspaces} 
Observe that $dim(\cc[X(\tt_n)]_{\lambda})$ is equal to the number of elements of $\Sn$ which lie in the gradation $\lambda$, since $\cc[\Sn] \cong \cc[X(\tt_n)]$ and the algebra $\cc[\Sn]$, considered as a group, is free.\\ 

 Let $r_{i,j} \in \zz^n$ denote the vector with ones in the $i$th and $j$th coordinates ($i < j$) and zero everywhere else.
We will say that a vector $r_{i,j}$ (respectively pair $(i,j)$) embraces $r_{i',j'}$ (pair $(i',j')$), if $i < i' < j' < j$.

\begin{theorem} $dim(\cc[X(\tt_n)]_{\lambda})$ is equal to the number of decompositions of $\lambda \in \zz^n$ into a sum of vectors $r_{i,j}$ in which no term embraces any other term. \label{nominator_combi} \end{theorem}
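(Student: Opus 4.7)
The plan is to compose three identifications. First, since $\cc[X(\tt_n)] \cong \cc[\Sn]$ as $\zz^n$-graded algebras and the semigroup monomials form a $\cc$-basis, Lemma~$(\ref{grad})$ gives
\[ \dim \cc[X(\tt_n)]_\lambda \;=\; \#\{\, x \in \Sn : \pi(x) = \lambda \,\}. \]
Second, Corollary~$(\ref{s_bijection})$ says each $x \in \Sn$ corresponds uniquely to a decomposition $x = \sum a_{i,j} w_{i,j}$ with $a_{i,j} \in \nn$ in which no two paths of nonzero coefficient intersect in an unordered way. Since $\pi(w_{i,j}) = r_{i,j}$, the constraint $\pi(x) = \lambda$ becomes $\sum a_{i,j} r_{i,j} = \lambda$, and so elements of $\Sn$ in gradation $\lambda$ biject with these ``no unordered intersection'' decompositions of $\lambda$.

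The heart of the argument is the third identification: showing that in the specific tree $\tt_n$ (of the form pictured in Figure~$\ref{fig:simple_tree}$), two paths intersect in an unordered way if and only if the two corresponding index pairs are in the embrace relation. Fixing leaves $i<j<k<l$ and exploiting the linear-spine structure of $\tt_n$, I would verify directly that $w_{i,j}$ and $w_{k,l}$ occupy disjoint pieces of the spine, while $(w_{i,k}, w_{j,l})$ and $(w_{i,l}, w_{j,k})$ form the two intersecting dual pairs; since $(i,k) < (i,l)$ lexicographically, the former pair is ordered and the latter unordered. Of the three configurations only $(i,l)\supset(j,k)$ is an embrace, which matches the unordered intersection; neither $(i,k),(j,l)$ nor the disjoint pair $(i,j),(k,l)$ embrace each other. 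The convention that paths sharing a leaf intersect in an ordered way is compatible, because two distinct pairs sharing an index cannot embrace each other (embracing requires four distinct endpoints).

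Composing the three bijections yields the theorem. The principal difficulty lies in the third step, which requires carefully tracing the lexicographic convention behind ordered intersections and checking, case by case on the three possible pairings of four leaves in $\tt_n$, that the ordered representative of each dual pair is precisely the pair of indices not in the embrace relation.
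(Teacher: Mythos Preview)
Your proof is correct and follows essentially the same approach as the paper's: both reduce to Corollary~(\ref{s_bijection}) together with the observation that in the caterpillar tree $\tt_n$ two paths intersect in an unordered way precisely when one index pair embraces the other. The paper merely asserts this combinatorial equivalence, whereas you spell out the case analysis on the three pairings of four leaves; your treatment is thus a more detailed version of the same argument.
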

For example, for $n=4$ and $\lambda = [1,1,1,1]$ we have the following decompositions:  
	\begin{eqnarray*}
		\lambda = [1,1,0,0] + [0,0,1,1] = [1,0,1,0] + [0,1,0,1] = [1,0,0,1] + [0,1,1,0]
	\end{eqnarray*}

The last decomposition is "invalid", because $[1,0,0,1]$ embraces $[0,1,1,0]$. This theorem implies that $dim(\cc[X(\tt_4)])_{[1,1,1,1]} = 2$.
\begin{proof}
Firstly, note that in the tree $\tt_n$ two paths $w_{i,j}$ and $w_{i',j'}$ intersect in an unordered way if and only if one pair embraces other one, that is, either $(i,j)$ embraces $(i',j')$ or $(i',j')$ embraces $(i,j)$.

Let $x \in \Sn$ lie in the gradation $\lambda$ and be equal to $\sum_{i,j} a_{i,j} w_{i,j}$, where $a_{i,j} \in \nn$ Then
	\begin{eqnarray*}
		\lambda = \sum_{i,j} a_{i,j} r_{i,j}
	\end{eqnarray*}

Proposition ($\ref{s_bijection}$) shows that the elements of $\Sn$ are in bijection with sums of paths, for which no path intersects another path in an unordered way.

The remarks above show that the elements of $\Sn$ which lie in a gradation $\lambda$ are in bijection with decompositions of $\lambda$ into sums of vectors $r_{i,j}$ such that no vector embraces any other.
\end{proof}

\subsection{Formulas for Hilbert series}
	Let us define a function $Multi$ from sequences of ordered pairs of integers $(i,j)$ s.t. $1 \leq i < j \leq n$ to $\zz[z_1, \ldots, z_n]$.
	\begin{eqnarray*}
		Multi((i_1,j_1),(i_2,j_2), \ldots, (i_k,j_k)) = \prod_{\mbox{ over distinct pairs } (i_l,j_l)} z_{i_l}z_{j_l}
	\end{eqnarray*}
	For example $Multi((1,2),(1,3), (1,2),(2,4)) = z_1z_2 \cdot z_1z_3 \cdot z_2z_4 = z_1^2z_2^2z_3z_4$.\\

	We also define a function $Sum$ from sequences of ordered pairs of integers $(i,j)$ s.t. $1 \leq i < j \leq n$ to $\zz^n$.
	\begin{eqnarray*}
		Sum((i_1,j_1),(i_2,j_2), \ldots, (i_k,j_k)) = \sum_{\mbox{ over distinct pairs } (i_l,j_l)} r_{i_l,j_l}
	\end{eqnarray*}
	For example $Sum((1,2),(1,3), (1,2),(2,4)) = r_{1,2} + r_{1,3} + r_{2,4} = [1,1,0,0] + [1,0,1,0] + [0,1,0,1] = [2,2,1,1]$.\\

	It holds that
	\begin{eqnarray}
		Multi((i_1,j_1),(i_2,j_2), \ldots, (i_k,j_k)) = z^{Sum((i_1,j_1),(i_2,j_2), \ldots, (i_k,j_k))}
		\label{MultiSum}
	\end{eqnarray}\\

	Let $Exc = \{ ((i,j), (i',j')) \mbox{ } | \mbox{ } 1 \leq i < i' < j' < j \leq n;\mbox{ } i,j,i',j' \in \nn \}$. In other words, $Exc$ is a set of 2-tuples of pairs such that the first pair embraces the second one. We introduce a natural lexicographic ordering on $Exc$. Let $e^1 = (i,j)$ and $e^2 = (i',j')$ for $e = ((i,j), (i',j'))$. 
	
	We define functions $\widetilde{Multi}$ from sequences of elements of Exc to $\zz[z_1,\ldots,z_n]$ and $\widetilde{Sum}$ from sequences of elements of Exc to $\zz^n$ in the following way:
	\begin{eqnarray*} 
		& & \widetilde{Multi}(e_1,\ldots,e_n) := Multi(e_1^1, e_1^2, \ldots, e_k^1, e_k^2) \mbox { where } e_1,\ldots,e_n \in Exc  \\
		& & \widetilde{Sum}(e_1,\ldots,e_n) := Sum(e_1^1, e_1^2, \ldots, e_k^1, e_k^2) \mbox { where } e_1,\ldots,e_n \in Exc 
	\end{eqnarray*}

\begin{theorem} The Hilbert-Poincare series $W_n$ is equal to
	\begin{eqnarray*} \big(1 + \sum_{k=1}^n (-1)^k\sum_{\substack{e_1 < \ldots < e_k \\ e_1,\ldots, e_k \in Exc} } \widetilde{Multi}(e_1, \ldots, e_k)\big)  / \prod_{1 \leq i < j \leq n} (1 - z_iz_j) 
	\end{eqnarray*}
\end{theorem}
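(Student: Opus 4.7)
The strategy is to combine Theorem~\ref{nominator_combi} with a straightforward inclusion--exclusion over the set $Exc$. By Theorem~\ref{nominator_combi}, for every $\lambda\in\zz^n$ the coefficient $\dim(\cc[X(\tt_n)]_\lambda)$ equals the number of assignments $a\colon\{(i,j):1\le i<j\le n\}\to\nn$ with $\sum_{i<j} a_{i,j}r_{i,j} = \lambda$ and such that whenever $(i,j)$ embraces $(i',j')$ at least one of $a_{i,j}$, $a_{i',j'}$ vanishes. Dropping the embracing restriction, the resulting generating series enumerates all assignments, so
\[
\sum_{a} z^{\sum a_{i,j}r_{i,j}} \;=\; \sum_{a} \prod_{i<j}(z_iz_j)^{a_{i,j}} \;=\; \prod_{1\le i<j\le n}\frac{1}{1-z_iz_j},
\]
which will be the denominator in the final answer.

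Next I would encode the embracing restriction via inclusion--exclusion. For each assignment $a$ set $E(a):=\{e=((i,j),(i',j'))\in Exc:a_{i,j}\ge 1 \text{ and } a_{i',j'}\ge 1\}$; the admissible $a$ are precisely those with $E(a)=\emptyset$. Using the identity $[E(a)=\emptyset]=\sum_{F\subseteq E(a)}(-1)^{|F|}$ and interchanging summations gives
\[
W_n(z) \;=\; \sum_{F\subseteq Exc}(-1)^{|F|}\sum_{a\,:\,F\subseteq E(a)}z^{\sum a_{i,j}r_{i,j}}.
\]
For a fixed subset $F=\{e_1,\dots,e_k\}$ with $e_1<\cdots<e_k$ in the lexicographic order on $Exc$, let $P(F)$ be the set of distinct pairs appearing among $e_1^1,e_1^2,\dots,e_k^1,e_k^2$. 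Then $F\subseteq E(a)$ is equivalent to $a_p\ge 1$ for every $p\in P(F)$. The substitution $a_p\mapsto a_p-1$ on $P(F)$ bijects these assignments with arbitrary ones and pulls out a factor $\prod_{p\in P(F)}z^{r_p}$, so the inner sum equals
\[
\Bigl(\prod_{p\in P(F)}z^{r_p}\Bigr)\cdot\prod_{1\le i<j\le n}\frac{1}{1-z_iz_j}.
\]
By the definition of $\widetilde{Multi}$ together with (\ref{MultiSum}), the first factor is exactly $\widetilde{Multi}(e_1,\dots,e_k)$. Pulling the common denominator out of the resulting sum over strictly increasing tuples in $Exc$ and noting that the $k=0$ term contributes $1$ produces the formula in the statement.

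The argument is essentially formal, so the only real obstacle is notational bookkeeping. In particular one has to check that the convention ``product over \emph{distinct} pairs'' in the definition of $Multi$ is exactly what is needed so that $\widetilde{Multi}(e_1,\dots,e_k)=\prod_{p\in P(F)}z^{r_p}$, with any pair shared by several embracements counted only once; this is the step where a naive computation would overcount. I would also remark briefly on the upper summation bound: strictly increasing tuples in $Exc$ of length exceeding $|Exc|$ do not exist, so the range of $k$ may be truncated at any convenient value (including the $n$ printed in the statement) without affecting the series.
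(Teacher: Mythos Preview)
Your argument is essentially the paper's own proof: invoke Theorem~\ref{nominator_combi}, run inclusion--exclusion over the ``bad'' events indexed by $Exc$, and identify the resulting shift factor with $\widetilde{Multi}$ via the ``distinct pairs'' convention, exactly as the paper does through the bijection $\bigcap_l \Omega^{\lambda}_{e_l}\leftrightarrow \Omega^{\lambda-\widetilde{Sum}(e_1,\dots,e_k)}$. Your explicit remark that the de-duplication in $Multi$ matches the condition ``$a_p\ge 1$ for every $p\in P(F)$'' is a useful clarification the paper leaves implicit.

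One caveat on your final sentence: the safe truncation point is $|Exc|=\binom{n}{4}$, not $n$. For $n\ge 6$ one has $\binom{n}{4}>n$, so stopping the outer sum at $k=n$ genuinely drops nonzero terms; the $n$ in the displayed statement should be read as a typo for $|Exc|$ (or $\infty$), and your parenthetical ``including the $n$ printed in the statement'' is not correct as written.
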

\begin{proof}
	Let $\Omega^{\lambda}$ denote the set of all decompositions of $\lambda \in \zz^n$ into sums of vectors $r_{i,j}$. Let $A^{\lambda}$ denote the set of all decompositions for which no summand embraces other one. Let $\Omega^{\lambda}_{(a,b),(a',b')}$ denote the set of all decompositions in which both summands $r_{a,b}$ and $r_{a',b'}$ occur.

	Theorem ($\ref{nominator_combi}$) implies that $|A^{\lambda}| = dim(\cc[X(\tt_n)]_{\lambda})$.
	By definition
	\begin{eqnarray*}
		A^{\lambda} = \Omega^{\lambda} \backslash \bigcup_{e \in Exc} \Omega^{\lambda}_{e}
	\end{eqnarray*}

	By inclusion-exclusion formula we have that
	\begin{eqnarray}
		|A^{\lambda}| = |\Omega^{\lambda}| - \sum_{k=1}^n (-1)^{k-1} \sum_{\substack{e_1 < \ldots < e_k \\ e_1,\ldots, e_k \in Exc} } |\bigcap_{l=1}^k \Omega^{\lambda}_{e_l}|
		\label{in_and_out_formula}
	\end{eqnarray}

	Let
	\begin{eqnarray*}
		W(\Omega) = \sum_{\lambda \in \zz^n} |\Omega^{\lambda}| z^{\lambda}
	\end{eqnarray*}

	Note that $\bigcap_{l=1}^k \Omega^{\lambda}_{e_l}$ consists of exactly those decompositions which contain summands $r_{e_1^1}$, $r_{e_1^2}$, $\ldots$, $r_{e_k^1}$, $r_{e_k^2}$ (note that subsequent occurrences of the pairs should be omitted - each pair is considered at most once). The element, in decomposition of which each of this summands is contained exactly ones, is equal to $\widetilde{Sum}(e_1,\ldots, e_k)$. There is a natural bijection between decompositions of $\lambda$ containing summands mentioned above and arbitrary decompositions of $\lambda - \widetilde{Sum}(e_1,\ldots,e_k)$. Therefore
	\begin{eqnarray*} |\bigcap_{l=1}^k \Omega^{\lambda}_{e_l}| = |\Omega^{\lambda - \widetilde{Sum}(e_1,\ldots,e_k)}| \end{eqnarray*}

	For fixed $e_1,\ldots,e_k \in Exc$ we have that
	\begin{eqnarray*}
		\sum_{\lambda \in \zz^n} |\bigcap_{l=1}^k \Omega^{\lambda}_{e_l}| z^{\lambda} & = & \sum_{\lambda \in \zz^n} |\Omega^{\lambda - \widetilde{Sum}(e_1,\ldots,e_k)}| z^{\lambda}  =  \\
		& & \mbox { take } \lambda' = \lambda - \widetilde{Sum}(e_1,\ldots,e_k) \\
		& = & \sum_{\lambda' \in \zz^n} |\Omega^{\lambda'}| z^{\lambda'} z^{\widetilde{Sum}(e_1,\ldots,e_k)} \mathop{=}^{(\ref{MultiSum})} \\
		& = & \sum_{\lambda' \in \zz^n} |\Omega^{\lambda'}| z^{\lambda'} \widetilde{Multi}(e_1,\ldots,e_k) = \\
		& = & W(\Omega) \widetilde{Multi}(e_1,\ldots,e_k) 
	\end{eqnarray*}

	Let us multiply both sides of equality ($\ref{in_and_out_formula}$) by $z^{\lambda}$ and sum over $\lambda \in \zz^n$. We obtain:
	\begin{eqnarray*}
		W_n = W(\Omega) + \sum_{k=1}^n (-1)^{k} \sum_{\substack{e_1 < \ldots < e_k \\ e_1,\ldots, e_k \in Exc} } W(\Omega) \widetilde{Multi}(e_1,\ldots,e_l)
	\end{eqnarray*}

	The conclusion of the theorem follows from the following lemma:
	\begin{lem}
		\begin{eqnarray*} W(\Omega) = 1 / \prod_{1 \leq i < j \leq n} (1 - z_iz_j) \end{eqnarray*}
	\end{lem}
	\begin{proof}
		Observe that
		\begin{eqnarray*}
				1 / \prod_{1 \leq i < j \leq n} (1 - z_iz_j) = \prod_{1 \leq i < j \leq n} (1 + z_iz_j + (z_iz_j)^2 + \cdots)
		\end{eqnarray*}
		After expanding this product, we see that the coefficient of the $z^{\lambda}$ term is equal to the number of decompositions of $\lambda$ into the sum of $r_{i,j}$, which proves the lemma. 
	\end{proof}
\end{proof}

Note that if we define $Exc$ as a set of pairs of paths intersecting in an unordered way, then the theorem will be true for an arbitrary tree with an analogous proof.

\subsection{A recursive formula for Hilbert series}
\begin{figure}[here] \centering \includegraphics[width=10cm]{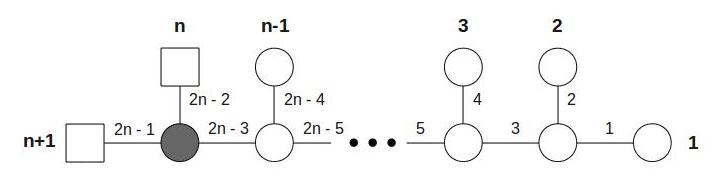} \caption{Tree $\tt_{n+1}$} \label{fig:simple_tree2} \end{figure}

We will try to calculate the semigroup $\Snp$ from $\Sn$ recursively.

Let us denote by $p_{n+1}$ the projection $\zz^{2n-1} \to \zz^{2n-3}$
\begin{eqnarray*} p_{n+1}(s_1,\ldots, s_{2n-3},s_{2n-2},s_{2n-1}) = (s_1,\ldots,s_{2n-3}) \mbox { where } s_i \in \zz \end{eqnarray*}
%in which the values of the last two edges (those with biggest numbers- see Figure ($\ref{fig:simple_tree}$)) of the tree $\tt_{n+1}$ in lattice $\zz^{2n-1}$ are ``forgotten'':

$\tt_{n+1}$ can be constructed from $\tt_n$ by adding two vertices (squares) and corresponding incident edges to the last leaf of $\tt_n$ (filled circle). The last leaf of $\tt_n$ becomes an interior vertex. 

We treat $p_{n+1}$ as a restriction of a lattice $\zz^{2n-1}$ spanned by edges of $\tt_{n+1}$ to a lattice $\zz^{2n-3}$ spanned by edges of $\tt_{n}$, in which the values of the last two edges (those with biggest numbers - see Figure ($\ref{fig:simple_tree2}$)) of the tree $\tt_{n+1}$ in lattice $\zz^{2n-1}$ are ``forgotten''. Recall that $\Snp \subset \zz_{2n-1}$ and $\Sn \subset \zz_{2n-3}$.

Let $w_{i,j} \in \zz^{2n-1}$ be the shortest path between $i$th and $j$th leaves in $\tt_{n+1}$ and let $\tilde{w}_{i,j} \in \zz^{2n-3}$ be the shortest path between $i$th and $j$th leaves in $\tt_{n}$. \\

Under the restriction $p_{n+1}$,
		\begin{itemize}
			\item the path between the last two leaves of $\tt_{n+1}$ ``becomes empty'', i.e.
				\begin{eqnarray*} p_{n+1}(w_{n,n+1}) = 0 \end{eqnarray*}
			\item other paths which start in the last two leaves shrink by one and subsequently they start at the last leaf of $\tt_n$, i.e.
				\begin{eqnarray*} 
					& & p_{n+1}(w_{i,n}) = \tilde{w}_{i,n} \mbox { for } i \neq n+1 \\
					& & p_{n+1}(w_{i,n+1}) = \tilde{w}_{i,n} \mbox { for } i \neq n 
				\end{eqnarray*}
				(remember that the $n$th leaf of $\tt_n$ is not the $n$th leaf of $\tt_{n+1}$!)
			\item all other paths do not change
				\begin{eqnarray*} p_{n+1}(w_{i,j}) = \tilde{w_{i,j}} \mbox{ for } i,j \neq n,n+1 \end{eqnarray*}
		\end{itemize}
Therefore, $p_{n+1}$ maps paths in $\Snp$ to paths in $\Sn$. Every path in $\Sn$ is in the image, so $p_{n+1}$ restricts to a surjective homomorphism of semigroups from $\Snp$ to $\Sn$.\\

%\begin{lem} Under the restriction $p_{n+1}$, elements of $\Snp$ turn into elements of $\Sn$. Every element of $\Sn$ is the image of an element of $\Snp$. \end{lem}
%\begin{proof} The first part of the lemma follows from the second remark above, that generators of $\Snp$ become generators of $\Sn$. In order to prove the second part, let us take an element $a \in \Sn$. The element $a$ is a sum of paths in $\tt_n$. Paths which end at the edge $2n-3$ can be extended to paths in $\tt_{n+1}$ which end at the edge $2n-1$ or at the edge $2n-2$. This constructs an element of $\Snp$ whose restriction is $a$. \end{proof}

\begin{lem} Let $s = (s_1,\ldots, s_{2n-3}, s_{2n-2},s_{2n-1}) \in \Snp$. Then $s_{2n-2} = \beta + k, s_{2n-1} = \gamma + k$, where $k \in \mathbb{N}$, and $\beta + \gamma = s_{2n-3}$. \end{lem}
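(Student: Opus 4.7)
The plan is to unpack an arbitrary element of $\Snp$ as a non-negative integer combination of paths, which is possible by the definition of the semigroup, and then read off the last three coordinates. Looking at Figure~\ref{fig:simple_tree2}, the edges numbered $2n-3$, $2n-2$, $2n-1$ are exactly the three edges meeting at the distinguished interior vertex $v$ of $\tt_{n+1}$ (the former last leaf of $\tt_n$): edge $2n-3$ goes from $v$ into the rest of $\tt_n$, while edges $2n-2$ and $2n-1$ run from $v$ to the two new leaves $n$ and $n+1$.

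The key observation is that each path $w_{i,j}$ of $\tt_{n+1}$ interacts with this ``Y'' at $v$ in exactly one of four ways, depending on its endpoints:
\begin{enumerate}
  \item if $i,j \le n-1$, then $w_{i,j}$ lies entirely in the rest of $\tt_n$ and does not touch $v$, contributing $(0,0,0)$ to $(s_{2n-3},s_{2n-2},s_{2n-1})$;
  \item if $i \le n-1$ and $j = n$, then $w_{i,j}$ uses edges $2n-3$ and $2n-2$, contributing $(1,1,0)$;
  \item if $i \le n-1$ and $j = n+1$, then $w_{i,j}$ uses edges $2n-3$ and $2n-1$, contributing $(1,0,1)$;
  \item if $(i,j) = (n,n+1)$, then $w_{i,j}$ uses only edges $2n-2$ and $2n-1$, contributing $(0,1,1)$.
\end{enumerate}

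Writing $s = \sum_{i<j} a_{i,j}\, w_{i,j}$ with $a_{i,j} \in \nn$, I would set
\begin{eqnarray*}
\beta := \sum_{i<n} a_{i,n}, \qquad \gamma := \sum_{i<n} a_{i,n+1}, \qquad k := a_{n,n+1}.
\end{eqnarray*}
These are non-negative integers by construction, and summing the contributions of the four cases above yields
\begin{eqnarray*}
s_{2n-3} = \beta + \gamma, \qquad s_{2n-2} = \beta + k, \qquad s_{2n-1} = \gamma + k,
\end{eqnarray*}
which is precisely the statement of the lemma.

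There is really no obstacle here: once the three edges incident to $v$ are correctly identified with the last three coordinates of the lattice $\zz^{2n-1}$, the claim reduces to the mechanical classification of paths by how they cross $v$. The decomposition $s = \sum a_{i,j} w_{i,j}$ need not be unique (and typically is not, as illustrated by the examples following Figure~\ref{fig:graphic_tree}), but the lemma asserts only existence of suitable $\beta$, $\gamma$, $k$, so any chosen decomposition does the job.
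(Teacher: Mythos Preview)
Your proof is correct and follows essentially the same approach as the paper: write $s = \sum a_{i,j} w_{i,j}$, set $\beta = \sum_{i<n} a_{i,n}$, $\gamma = \sum_{i<n} a_{i,n+1}$, $k = a_{n,n+1}$, and read off the last three coordinates from the classification of paths through the trivalent vertex $v$. Your presentation is in fact slightly more explicit than the paper's, spelling out all four cases and noting that non-uniqueness of the decomposition is irrelevant since only existence is claimed.
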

The statement of the lemma is equivalent to claiming that the sum of the values on the last two edges in $\tt_{n+1}$ must be greater or equal than the value on the preceding edge and differs from it by paths between last two leaves.
\begin{proof}
	Let $s = \sum_{1 \leq i < j \leq n+1} a_{i,j} w_{i,j}$ where $a_{i,j} \in \nn$. 
	
	Let $\beta = \sum_{i<n} a_{i,n}$ and $\gamma = \sum_{i<n} a_{i,n+1}$ be the number of paths different from the path between last two leaves and passing through the edges $2n-2$ and $2n-3$ respectively. 
	
	Obviously $s_{2n-2} = \beta + a_{n,n+1}$ and $s_{2n-1} = \gamma + a_{n,n+1}$. Each path crossing the edge $2n-3$ crosses either the edge $2n-1$ or the edge $2n-2$, so $\gamma + \beta = s_{2n-3}$. Take $k=a_{n,n+1}$.
\end{proof}

\begin{lem} 
	\label{sn}
It holds that
	\begin{eqnarray*}
		\Snp = \{ (s_1,s_2,\ldots, s_{2n-3}, \beta+k, \gamma+k) \mbox { } | \mbox { } (s_1,s_2,\ldots,s_{2n-3}) \in \Sn \mbox{, } k \in \nn \mbox{ and } \beta + \gamma = s_{2n-3}\}. \end{eqnarray*} \end{lem}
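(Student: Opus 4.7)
The plan is to prove the equality of sets by establishing both inclusions. The inclusion $\Snp \subseteq \{\cdots\}$ is essentially what the preceding lemma gives, once we recall that $p_{n+1}(s) \in \Sn$ whenever $s \in \Snp$ (established just before the lemma, when $p_{n+1}$ was shown to restrict to a surjective semigroup homomorphism). So the substance lies in the reverse inclusion: given $(s_1,\ldots,s_{2n-3}) \in \Sn$, non-negative integers $\beta,\gamma$ with $\beta+\gamma = s_{2n-3}$, and $k \in \nn$, I must exhibit a decomposition of $(s_1,\ldots,s_{2n-3},\beta+k,\gamma+k)$ into a non-negative integer combination of paths in $\tt_{n+1}$.

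The starting point is any decomposition $(s_1,\ldots,s_{2n-3}) = \sum_{1 \leq i<j \leq n} a_{i,j}\tilde w_{i,j}$ with $a_{i,j} \in \nn$. Since the edge labelled $2n-3$ in $\tt_n$ is incident to the last leaf, the paths passing through it are exactly the $\tilde w_{i,n}$ with $i<n$, so $s_{2n-3} = \sum_{i<n} a_{i,n}$. I would then use the description of $p_{n+1}$ preceding the lemma: paths $\tilde w_{i,j}$ with $i,j<n$ lift uniquely to $w_{i,j}$ in $\tt_{n+1}$, while each $\tilde w_{i,n}$ admits two lifts, $w_{i,n}$ and $w_{i,n+1}$, routing the path through edge $2n-2$ or edge $2n-1$ respectively, with $w_{n,n+1}$ itself contributing $1$ to each of the last two edges and $0$ to edge $2n-3$.

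The key construction is to split, for each $i<n$, the coefficient $a_{i,n}$ as $a_{i,n} = b_{i,n} + c_{i,n}$ with $b_{i,n}, c_{i,n} \in \nn$, in such a way that $\sum_{i<n} b_{i,n} = \beta$ and $\sum_{i<n} c_{i,n} = \gamma$. Such a splitting exists because $\sum_{i<n} a_{i,n} = s_{2n-3} = \beta+\gamma$ and one may, for instance, greedily set $b_{i,n} = \min(a_{i,n},\, \beta - \sum_{i'<i} b_{i',n})$ and $c_{i,n} = a_{i,n}-b_{i,n}$, which terminates with the correct marginals precisely because $0 \leq \beta \leq s_{2n-3}$. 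Having made this choice, I would form
\begin{eqnarray*}
t := \sum_{1 \leq i<j<n} a_{i,j} w_{i,j} + \sum_{i<n} b_{i,n} w_{i,n} + \sum_{i<n} c_{i,n} w_{i,n+1} + k\, w_{n,n+1},
\end{eqnarray*}
which manifestly lies in $\Snp$. A direct verification then shows $p_{n+1}(t) = (s_1,\ldots,s_{2n-3})$ (because the lifts of $\tilde w_{i,n}$ are compatible with $p_{n+1}$ and $p_{n+1}(w_{n,n+1})=0$), while the value of $t$ on edge $2n-2$ is $\beta+k$ and on edge $2n-1$ is $\gamma+k$.

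The only step with genuine content is the distribution of the $a_{i,n}$ into $b_{i,n}+c_{i,n}$ with prescribed marginal sums, but this is a routine combinatorial argument once one observes that the constraints are feasible. Everything else is bookkeeping of how $p_{n+1}$ acts on the chosen lifts, together with the additive contributions of $k$ copies of $w_{n,n+1}$ to the last two coordinates.
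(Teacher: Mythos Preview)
Your proof is correct and follows essentially the same approach as the paper. Both arguments handle the forward inclusion via the preceding lemma and the fact that $p_{n+1}$ maps $\Snp$ into $\Sn$, and for the reverse inclusion both lift a decomposition of $(s_1,\ldots,s_{2n-3})$ by routing $\beta$ of the $s_{2n-3}$ paths ending at the last leaf of $\tt_n$ through edge $2n-2$ and the remaining $\gamma$ through edge $2n-1$, then adding $k$ copies of $w_{n,n+1}$; the only cosmetic difference is that the paper phrases the splitting as partitioning a multiset of size $\beta+\gamma$ into two parts, whereas you write it as choosing $b_{i,n}+c_{i,n}=a_{i,n}$ with prescribed marginals.
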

		We describe the fiber of the projection $p_{n+1}$.
\begin{proof}
	The last two lemmas imply that the elements of $\Snp$ must be of this form. We now prove that every element of this form belongs to the semigroup.\\

Let $s = (s_1,s_2,\ldots, s_{2n-3}, \beta +k, \gamma+k)$ for $k \in \nn$ and $\beta + \gamma = s_{2n-3}$. Let $(s_1,s_2,\ldots,s_{2n-3}) = \sum_{(i,j) \in P} \tilde{w}_{i,j}$ where $P$ is a multiset (the structure in which one element may occur many times) of ordered pairs of leaves of $\tt_n$. 

It holds that $s_{2n-3} = |\{(i,n) \in P \}|$ ($s_{2n-3}$ equals the number of elements of the form $(i,n)$ in $P$ counted with multiplicity). We divide this multiset $\{(i,n) \in P \}$ arbitrarily into two multisets $P,P'$ with $\beta$ and $\gamma$ elements respectively (recall that $\beta + \gamma$ is equal to the number of elements in the multiset counted with multiplicity).

We see that
	\begin{eqnarray*} s = \sum_{(i,j) \in P;\mbox{ } i,j <n} w_{i,j} + \sum_{(i,n) \in P'} w_{i,n} + \sum_{(i,n) \in P''} w_{i,n+1} + kw_{n,n+1} \end{eqnarray*}
Therefore $s \in \Snp$.

%We construct $s$ in the following way: we extend $\beta$ paths in $\tt_{n}$ which generates $p_{n+1}(s)$ and which contain the last leaf by the edge $2n-2$ and we extend $\gamma$ such paths by the edge $2n-1$. Additionally, we add $k$ paths between the two last leaves of $\tt_{n+1}$.
\end{proof}

Let $W_n$ be a Hilbert series for $X(\tt_n)$. Let us recall that $dim(\cc[X(\tt_n)]_{\lambda})$ is equal to the number of elements in $\Sn$ which belong to the gradation $\lambda$ (because $\cc[\Sn] \cong \cc[X(\tt_n)]$ and the algebra $\cc[\Sn]$, considered as a group, is free). 
\begin{theorem}
	Let $W_n = \sum_{i=0}^{\infty} w_i z_n^i$, where $w_i \in \zz[[z_1,\ldots,z_{n-1}]]$. Then for $n \geq 3$:
	\begin{eqnarray*}
		W_{n+1} = \left( \sum_{i=0}^{\infty} w_i \cdot (\sum_{l=0}^i z_n^{i-l} z_{n+1}^l)\right) \cdot \frac{1}{1 - z_n z_{n+1}}
	\end{eqnarray*} \label{rec_eq}
\end{theorem}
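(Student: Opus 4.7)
The plan is to enumerate $\Snp$ by grouping its elements over the projection $p_{n+1}\colon \Snp \to \Sn$, use Lemma \ref{grad} to track the gradation, and recognize the right-hand side as the resulting generating function. Throughout I use that $\dim \cc[X(\tt_n)]_\lambda$ equals the number of $s\in \Sn$ with $\pi(s)=\lambda$, so that $W_n = \sum_{s\in \Sn} z^{\pi(s)}$.

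First I would pin down the gradation of $(s,a,b) := (s_1,\ldots,s_{2n-3},a,b)\in \Snp$, where $s\in\Sn$ and $(a,b)\in\nn^2$ are constrained by Lemma \ref{sn}. Leaves $1,\ldots,n-1$ of $\tt_{n+1}$ and their incident edges are inherited unchanged from $\tt_n$, so by Lemma \ref{grad} coordinates $1,\ldots,n-1$ of the gradation of $(s,a,b)$ in $\zz^{n+1}$ coincide with the corresponding coordinates of the gradation of $s$ in $\zz^n$. The new edges $2n-2$ and $2n-1$ of $\tt_{n+1}$ are incident to the new leaves $n$ and $n+1$, so the last two coordinates of the gradation are $a$ and $b$. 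Finally, in $\tt_n$ the edge $2n-3$ is incident to the leaf $n$, so if $\tilde\lambda(s)$ denotes the gradation of $s\in \Sn$, then $\tilde\lambda_n(s) = s_{2n-3}$.

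Next I would compute, for fixed $i\ge 0$ and any $s\in\Sn$ with $s_{2n-3}=i$, the fiber $F_i := \{(a,b)\in\nn^2 \mid (s,a,b)\in\Snp\}$. By Lemma \ref{sn}, $F_i$ is the image of the map $(\alpha,\beta,m)\mapsto (\alpha+m,\beta+m)$ with $\alpha,\beta,m\ge 0$ and $\alpha+\beta=i$. This map is injective since $(a,b,i)$ determine $m=(a+b-i)/2$ and then $\alpha,\beta$ uniquely, so
\begin{eqnarray*}
\sum_{(a,b)\in F_i} z_n^{a}z_{n+1}^{b}
= \Bigl(\sum_{m=0}^{\infty}(z_n z_{n+1})^m\Bigr)\Bigl(\sum_{l=0}^{i} z_n^{i-l}z_{n+1}^{l}\Bigr)
= \frac{1}{1-z_nz_{n+1}}\sum_{l=0}^{i} z_n^{i-l}z_{n+1}^{l}.
\end{eqnarray*}
Notice that $F_i$ depends on $s$ only through $s_{2n-3}=i$, which is precisely what allows the sum to factor nicely.

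To finish, I would sum over $\Sn$, grouping by $i=s_{2n-3}=\tilde\lambda_n(s)$:
\begin{eqnarray*}
W_{n+1}
&=& \sum_{s\in\Sn} z_1^{\tilde\lambda_1(s)}\cdots z_{n-1}^{\tilde\lambda_{n-1}(s)} \sum_{(a,b)\in F_{s_{2n-3}}} z_n^{a}z_{n+1}^{b} \\
&=& \frac{1}{1-z_nz_{n+1}}\sum_{i=0}^{\infty}\Bigl(\sum_{l=0}^{i} z_n^{i-l}z_{n+1}^{l}\Bigr) \sum_{\substack{s\in\Sn \\ s_{2n-3}=i}} z_1^{\tilde\lambda_1(s)}\cdots z_{n-1}^{\tilde\lambda_{n-1}(s)}.
\end{eqnarray*}
The innermost sum is by definition the coefficient $w_i$ of $z_n^i$ in $W_n$, yielding the claimed formula. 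The only subtlety is verifying that the $(\alpha,\beta,m)$-parametrization of $F_i$ is actually a bijection so nothing is double-counted; this is the technical heart of the argument but is immediate once one writes $a+b=i+2m$. The rest is bookkeeping driven by Lemmas \ref{grad} and \ref{sn}.
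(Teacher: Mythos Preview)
Your proof is correct and follows essentially the same approach as the paper: both fiber the sum defining $W_{n+1}$ over the projection $p_{n+1}\colon \Snp\to\Sn$, use Lemma~\ref{sn} to parametrize each fiber by $(\beta,\gamma,k)$ (your $(\alpha,\beta,m)$) with $\beta+\gamma=s_{2n-3}$, and then recognize the resulting fiber sum as $\bigl(\sum_{l=0}^{i} z_n^{i-l}z_{n+1}^{l}\bigr)/(1-z_nz_{n+1})$. Your explicit check that the parametrization of $F_i$ is injective is a detail the paper leaves implicit, but otherwise the arguments coincide.
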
 \begin{proof}
The lemma $(\ref{grad})$ implies that the gradation of an element of a semigroup is the image of that element's projection to the space of leaves. For each element $s = (s_1,s_2,\ldots, s_{2n-3}) \in \Sn$, we must understand to which gradation the elements of $\Snp$ restricting to $s$ belong.

Let $(\alpha_1, \ldots, \alpha_n)$ be the gradation of the element $(s_1, \ldots, s_{2n-3}) \in \Sn$. Then the element \newline $(s_1,s_2,\ldots, s_{2n-3}, \beta +k, \gamma +k)$ where $\beta + \gamma = s_{2n-3}$ belongs to the gradation $(\alpha_1, \ldots, \alpha_{n-1}, \beta + k, \gamma +k)$. 

Therefore we get $W_{n+1}$ from $W_n$ by changing $z_n^i$ into the sum $\sum_{k \geq 0, \beta + \gamma = i} z_n^{\beta + k} z_{n+1}^{\gamma +k}$ (see lemma $\ref{sn}$). The theorem now follows from the equality: 
\begin{eqnarray*}
	&& \sum_{k \geq 0, \beta + \gamma = i} z_n^{\beta + k} z_{n+1}^{\gamma +k} = \left(\sum_{\beta + \gamma = i} z_n^{\beta} z_{n+1}^{\gamma}\right) \cdot \left(\sum_{k \geq 0} \left(z_n z_{n+1}\right)^k \right)  =
	\\ && \hspace{2cm} = \left(\sum_{l=0}^i z_n^{i-l} z_{n+1}^l \right) \cdot \frac{1}{1 - z_n z_{n+1}}
\end{eqnarray*}
\end{proof}
It is easy to see that the formula is also true for $n=2$.

\subsection{Other formulas for Hilbert series}
These formulas describe infinite objects (series), which is problematic because they don't allow "mechanical" calculations. The theorem regarding Hilbert-Poincare series implies that our series $W_n$ can be presented in the following form: 
\begin{eqnarray*} F_n(z_1,\ldots, z_n) / \prod_{1 \leq i < j \leq n} (1 - z_iz_j) \mbox{ where } F_n \in \zz[z_1,\ldots,z_n] \end{eqnarray*}
For performing calculations, the resursive formula for polynomials $F_n$ would be more valuable. Before we formulate it, let us define a few notions. Let:
\begin{eqnarray*}
F_n := \sum_{i=0}^d f_i z_n^i \mbox{, where } f_i \in \zz[z_1,\ldots,z_{n-1}] \mbox{ and } F_{n+1} = \sum_{i=0} \tilde{f_i} z_{n+1}^i
\end{eqnarray*}
Let $h_i$ be a sum of all monomials of degree $i$ and let $\sigma_i$ be the $i$-th elementary symmetric polynomial (both of $n-1$ variables $z_1,\ldots, z_{n-1}$). Let:
\begin{eqnarray*}
H_{s,l} := \sum_{r=0}^l h_{s-r} \cdot \sigma_r \cdot (-1)^r
\end{eqnarray*}

\begin{theorem} The following formulas hold: 
\begin{eqnarray*} 
&& \tilde{f}_t = \sum_{i=0}^d f_i a_{t-i,i} \mbox{, where: }\\
&& a_{k,l} = \sum_{\beta} z_n^{\beta} \sum_{\alpha=0}^{k+l} (-1)^{\alpha} \sigma_{\alpha} H_{k+\beta-\alpha, \beta}
\end{eqnarray*}
\end{theorem}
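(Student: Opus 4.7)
My plan is to convert the series-level recursion of Theorem \ref{rec_eq} into a polynomial-level identity on the numerators $F_n$ by systematically expanding every denominator as a power series and tracking coefficients.

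First, set $D_n := \prod_{1\leq i<j\leq n}(1-z_iz_j)$, so that $W_n = F_n/D_n$. The crucial structural observation is
$$D_n = D_{n-1}\cdot\prod_{j=1}^{n-1}(1-z_jz_n), \qquad D_{n+1} = D_n\cdot\prod_{j=1}^{n}(1-z_jz_{n+1}),$$
which cleanly separates the ``old'' denominator from the factors involving the newly added variable. I then use the classical identity $1/\prod_{j=1}^{n-1}(1-z_jz_n) = \sum_{m\geq 0} h_m\, z_n^m$ (the generating function for complete homogeneous symmetric polynomials) to expand $W_n$ as a power series in $z_n$, obtaining
$$w_p = \frac{1}{D_{n-1}}\sum_{i=0}^d f_i\,h_{p-i}.$$

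Second, I substitute this expansion into Theorem \ref{rec_eq} and multiply through by $D_{n+1}$. The factor $(1-z_nz_{n+1})$ in the denominator of Theorem \ref{rec_eq} cancels against the same factor of $D_{n+1}/D_n = \prod_{j=1}^n(1-z_jz_{n+1})$, and the $D_{n-1}$ hidden in $w_p$ cancels with a corresponding factor from $D_n$. What survives is
$$F_{n+1}\;=\;\prod_{j=1}^{n-1}(1-z_jz_n)\,\prod_{j=1}^{n-1}(1-z_jz_{n+1})\,\sum_{q,l\geq 0}\sum_{i=0}^d f_i\,h_{q+l-i}\,z_n^q z_{n+1}^l,$$
where the sum over $l$ arises because Theorem \ref{rec_eq} replaces $z_n^i$ by $\sum_{l\leq i} z_n^{i-l}z_{n+1}^l$.

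Third, I expand the two remaining products using elementary symmetric polynomials: $\prod_{j=1}^{n-1}(1-z_jz_n) = \sum_\alpha(-1)^\alpha\sigma_\alpha z_n^\alpha$, and analogously for $z_{n+1}$. Extracting the coefficient of $z_{n+1}^t$, grouping terms with the same $f_i$, and collecting powers $z_n^\beta$ by setting $\beta = \alpha + q$ and $k = t-i$, the inner $z_n$-side sum assembles exactly into $H_{k+\beta-\alpha,\,\beta}$ by its definition, while the $z_{n+1}$-side contributes the outer $\sum_\alpha (-1)^\alpha \sigma_\alpha$, reproducing the claimed formula $a_{t-i,\,i}$.

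The main obstacle is purely bookkeeping: one must carefully verify that the stated summation ranges ($\alpha\leq k+l$ in the outer sum, $r\leq\beta$ inside $H$) coincide with the bounds forced by $q,l\geq 0$, the vanishing $h_m = 0$ for $m<0$, and $\sigma_\alpha = 0$ for $\alpha > n-1$, so that the formal interchange of summations and the passage from a power-series identity to a polynomial identity is legitimate. Everything else is driven by the single nontrivial fact that the denominator stratifies multiplicatively as $D_{n+1} = D_{n-1}\cdot\prod(1-z_jz_n)\cdot\prod(1-z_jz_{n+1})$, which is precisely what makes the separate roles of $h$ (from inverting one product) and $\sigma$ (from multiplying by the other two products) appear in the answer.
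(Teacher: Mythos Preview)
The paper explicitly omits the proof of this theorem (``As the proof of the lemma is quite technical, we omit it in this version of the paper''), so there is nothing to compare your approach against. Your derivation is the natural one and, as far as I can check, correct: writing $W_n=F_n/D_n$, expanding $1/\prod_{j<n}(1-z_jz_n)=\sum_m h_m z_n^m$ to obtain $w_p$, substituting into Theorem~\ref{rec_eq}, and then cancelling $(1-z_nz_{n+1})$ and $D_{n-1}$ after multiplying through by $D_{n+1}$ does give exactly
\[
F_{n+1}=\Bigl(\sum_{\alpha'}(-1)^{\alpha'}\sigma_{\alpha'}z_n^{\alpha'}\Bigr)\Bigl(\sum_{\alpha}(-1)^{\alpha}\sigma_{\alpha}z_{n+1}^{\alpha}\Bigr)\sum_{q,l\ge 0}\sum_i f_i\,h_{q+l-i}\,z_n^q z_{n+1}^l.
\]
Extracting the coefficient of $z_{n+1}^t$ forces $\alpha+l=t$, so with $k=t-i$ one has $h_{q+l-i}=h_{q+k-\alpha}$; then collecting $z_n^\beta$ via $\beta=\alpha'+q$ yields $\sum_{r=0}^{\beta}(-1)^r\sigma_r\,h_{k+\beta-\alpha-r}=H_{k+\beta-\alpha,\beta}$, and the remaining $\alpha$-sum runs over $0\le\alpha\le t=k+i$, matching $a_{t-i,i}$ on the nose. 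Your remark about the summation bounds is accurate: the upper limits $\alpha\le k+l$ and $r\le\beta$ come precisely from $l\ge 0$, $q\ge 0$, and the conventions $h_m=0$ for $m<0$, $\sigma_\alpha=0$ for $\alpha>n-1$, and these also guarantee that the a~priori infinite sum over $\beta$ is in fact finite, so that $F_{n+1}$ is a polynomial.
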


As the proof of the lemma is quite technical, we omit it in this version of the paper.

\section{Euler characteristic of sheaves on a plane with four blown-up points}

\subsection{The properties of a projective space with four blown-up points}
	This section consists of material which is well known, so we will omit proofs. \\

	Let $X'$ be a surface constructed from a surface $X$ by blowing it up at a point. We will denote a standard projection from $X'$ to $X$ by $\pi$. The Picard group of a variety will be denoted by $Pic$.

	Let $V$ be a projective plane $\cc\pp^2$ blown-up in four generic points $p_1$, $p_2$, $p_3$ and $p_4$ (such that no three are colinear). By $E_{0,i}$ (for $1 \leq i \leq 4$), we will denote the exceptional divisor corresponding to $p_i$ and let $E_{i,j}$ (for $1 \leq i < j \leq 4$) denote the lifting to $V$ of the line which passes through $p_i$ and $p_j$. The standard projection from $V$ onto a projective space will be denoted by $\pi_V$. For $1 \leq i < j \leq 4$, let us introduce notion $\overline{E_{i,j}} := E_{k,l}$ ($\{k,l\} = \{1,2,3,4\} \backslash \{i,j\}$) and $\overline{E_{0,i}} = E_{0,i}$ ($1 \leq i \leq 4$). 

The following propositions hold:
\begin{enumerate}
	\item Divisors $E_{0,i}$ and $E_{1,2}$ form a basis of $Pic(V)$.
	\item The canonical divisor $K_V$ for the surface $V$ is equal to:
		\begin{eqnarray*}
			-\frac{1}{2} \sum_{0 \leq i < j \leq 4} E_{i,j}
		\end{eqnarray*}
		Additionally, it satisfies: 
		\begin{eqnarray*}
			K_V . E_{i,j} = -1 \\
			{K_V}^2 = 5
		\end{eqnarray*}
\end{enumerate}

Let $D = \sum_{i=1}^4 a_{0,i}E_{0,i} + a_{1,2}E_{1,2}$.

The Riemann-Roch theorem for surfaces ($\cite{Ha}$, V.1.6) implies that:
	\begin{eqnarray*}
		\chi(\oo(D)) & = & \frac{1}{2}\cdot (D^2 - K\cdot D) + \chi(\oo(V)) =  \frac{1}{2}(D^2 - K\cdot D) + 1 = \\	
		           & = & \frac{1}{2}\cdot \bigg(\Big(-a_{0,1}^2 -a_{0,2}^2-a_{0,3}^2-a_{0,4}^2 - a_{1,2}^2 +2a_{1,2}(a_{0,1}+a_{0,2})\Big) + \\
				   &   & \hspace{1cm} 	a_{0,1}+a_{0,2}+a_{0,3}+a_{0,4}+a_{1,2}\bigg) + 1
	\end{eqnarray*}

\subsection{Grassmannian and a plane with four blown-up points}
In order to verify the formulas from the second chapter, we will calculate the Euler characteristic of invertible sheaves on $V$ in another way.\\ \\
It is a classical result (see the introduction to the paper $\cite{Muk}$) that:
\begin{eqnarray}
	\cc[\gg_5] = \bigoplus_{\lambda \in \zz^n} \cc[\gg_5]_{\lambda} \cong \bigoplus_{D \in Pic(V)} H^0\left(X,\oo(D)\right)  
	\label{kapranov}
\end{eqnarray}
This is an isomorphism of rings with gradation in $\zz^n$. The divisor $D = \sum a_{i,j}\overline{E_{i,j}} \in Pic(V)$ corresponds to the gradation $\lambda(D) = \sum a_{i,j}r_{i,j}$, where $r_{i,j} \in \zz^n$ has ones on the $i$th and $j$th coordinates and zeroes on the other coordinates. \\

\begin{theorem}
	Let $D = \sum_{0 \leq i < j \leq 4} a_{i,j} \overline{E_{i,j}}$ be a divisor such that nonzero cohomologies of $\oo(D)$ vanish. Then the coefficient of the monomial term $\prod (z_iz_j)^{a_{i,j}}$ in the Hilbert series $\gg_5$ (for the action of the torus described in the first chapter) is equal to $\chi(\oo(D))$. 
\end{theorem}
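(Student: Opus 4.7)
The plan is to chain together three ingredients already on the table: the ring isomorphism (\ref{kapranov}) taken from Mukai, the definition of our $\zz^n$-gradation on $\cc[\gg_5]$, and the vanishing hypothesis on higher cohomology. First I would translate the monomial $\prod_{0 \leq i < j \leq 4}(z_iz_j)^{a_{i,j}}$ into the language of eigenspaces. Writing $\lambda(D) = \sum a_{i,j} r_{i,j}$ as in (\ref{kapranov}) and unpacking $r_{i,j}$, one sees that $z^{\lambda(D)}$ is exactly the product $\prod (z_iz_j)^{a_{i,j}}$, since each factor $z_k$ appears with exponent $\sum_{\{i,j\}\ni k} a_{i,j}$. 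Thus the coefficient in question is by definition $\dim \cc[\gg_5]_{\lambda(D)}$.

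Next I would invoke the isomorphism (\ref{kapranov}) component-wise. Because it is a $\zz^n$-graded ring isomorphism and the summand indexed by $D \in \mathrm{Pic}(V)$ corresponds precisely to the gradation $\lambda(D)$, we get $\cc[\gg_5]_{\lambda(D)} \cong H^0\!\left(V, \oo(D)\right)$ as vector spaces. Consequently the coefficient extracted above equals $h^0(\oo(D))$.

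Finally I would apply the hypothesis that $h^i(\oo(D)) = 0$ for $i>0$, so that the Euler characteristic collapses to
\begin{eqnarray*}
\chi(\oo(D)) = \sum_{i \geq 0}(-1)^i h^i(\oo(D)) = h^0(\oo(D)).
\end{eqnarray*}
Combining the two displayed equalities gives the theorem.

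There is no real obstacle here beyond bookkeeping: the only delicate step is verifying that the $\zz^n$-gradation on $\cc[\gg_5]$ coming from the torus action of Section~2 coincides with the $\mathrm{Pic}(V)$-gradation used in (\ref{kapranov}) under the correspondence $D \mapsto \lambda(D)$, which is exactly the content of Mukai's result and requires no further computation on our side. Once that identification is invoked, the proof is a one-line chain $\text{coefficient} = \dim \cc[\gg_5]_{\lambda(D)} = h^0(\oo(D)) = \chi(\oo(D))$.
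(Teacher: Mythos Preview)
Your argument is correct and follows essentially the same route as the paper: identify the monomial's coefficient with $\dim \cc[\gg_5]_{\lambda(D)}$, apply (\ref{kapranov}) to equate this with $h^0(\oo(D))$, and then use the vanishing hypothesis to pass to $\chi(\oo(D))$. The paper's proof is a two-line version of exactly this chain, leaving the first identification implicit.
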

\begin{proof}
	The formula ($\ref{kapranov}$) implies that:
	\begin{eqnarray*}
		h^0(\oo(D)) = dim(\cc[{\gg}_5]_{\lambda(D)})
	\end{eqnarray*}
	As nonzero cohomologies of $\oo(D)$ vanish, it holds: $h^0(\oo(D)) = \chi(\oo(D))$.
\end{proof}

To verify formulas for Hilbert series, we will give some examples of divisors $D$ on the surface $V$, for which nonzero cohomologies vanish. 
\begin{lem} Nonzero cohomologies of $-2K + s_1E_{i,j} + s_2E_{k,l}$ vanish (for $s_1,s_2 \in \{1,-1\}$). \end{lem}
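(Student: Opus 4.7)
The plan is to apply Kodaira vanishing: for a smooth projective surface in characteristic zero, $H^i(V, \mathcal{O}(D)) = 0$ for $i > 0$ whenever $D - K$ is ample. Writing $D := -2K + s_1 E_{i,j} + s_2 E_{k,l}$, it therefore suffices to show that
\begin{eqnarray*}
  D' := D - K = -3K + s_1 E_{i,j} + s_2 E_{k,l}
\end{eqnarray*}
is ample, which I would verify through the Nakai--Moishezon criterion: $D'^2 > 0$ together with $D' \cdot C > 0$ for every irreducible curve $C \subset V$.

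The key structural input is that $V$ is a del Pezzo surface of degree $5$, whose cone of effective divisors $\overline{NE}(V)$ is generated by the ten $(-1)$-curves $E_{0,i}$ (for $1 \leq i \leq 4$) and $E_{i,j}$ (for $1 \leq i < j \leq 4$). The intersection numbers are exactly those already recorded: each such curve $E$ satisfies $K \cdot E = -1$ and $E^2 = -1$, while for any two distinct $(-1)$-curves $E \ne E'$ from the list one has $E \cdot E' \in \{0, 1\}$. Since every nonzero effective class is a nonnegative combination of the $(-1)$-curves with at least one positive coefficient, it is enough to check $D' \cdot E > 0$ for each of these ten curves.

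I would then estimate, for an arbitrary $(-1)$-curve $E$,
\begin{eqnarray*}
  D' \cdot E \;=\; 3 + s_1 (E_{i,j} \cdot E) + s_2 (E_{k,l} \cdot E) \;\geq\; 3 - 1 - 1 \;=\; 1,
\end{eqnarray*}
and expand the self-intersection using $K^2 = 5$ together with $K \cdot E_{i,j} = K \cdot E_{k,l} = -1$ and $E_{i,j}^2 = E_{k,l}^2 = -1$ to get
\begin{eqnarray*}
  D'^2 \;=\; 43 + 6(s_1 + s_2) + 2 s_1 s_2 (E_{i,j} \cdot E_{k,l}),
\end{eqnarray*}
whose minimum is attained at $s_1 = s_2 = -1$ with $E_{i,j} = E_{k,l}$ and equals $29 > 0$. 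Ampleness of $D'$ follows from Nakai--Moishezon, and Kodaira vanishing then yields $H^1(\mathcal{O}(D)) = H^2(\mathcal{O}(D)) = 0$.

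The only mildly subtle step is the reduction to checking positivity against the $(-1)$-curves alone, which rests on the classical description of $\overline{NE}(V)$ for a del Pezzo surface of degree $5$; everything else is arithmetic with the intersection numbers already established. An alternative, slightly more elementary route is to handle $H^2$ by Serre duality (observing that $3K - s_1 E_{i,j} - s_2 E_{k,l}$ has negative intersection with the ample class $-K$ and hence admits no sections) and to squeeze $H^1$ out of Riemann--Roch once the numerical computations have been performed, but the Kodaira approach is cleaner.
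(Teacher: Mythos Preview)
Your proof is correct and follows essentially the same route as the paper: show that $-3K + s_1E_{i,j} + s_2E_{k,l}$ is ample via Nakai--Moishezon, then apply Kodaira vanishing. You are in fact more explicit than the paper, which writes $D'\cdot E = 3 + \cdots$ without justifying that it suffices to test against the ten $(-1)$-curves; your appeal to the structure of $\overline{NE}(V)$ for a degree-$5$ del Pezzo fills that gap, and your computation of $D'^2$ is also cleaner than the paper's (which contains a minor slip in the cross-term coefficient).
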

\begin{proof}
	At first we will show that $D = -3K + s_1 E_{i,j} + s_2 E_{k,l}$ is ample. It follows from the Nakai-Moishezon criterion ($\cite{Ha}$, V.1.10), which states that a divisor $D$ on a surface is ample if and only if $D^2> 0$ and $D.E > 0$ for every irreducible curve $E$ on this surface. Let us note that:
	\begin{eqnarray*}
		&& D^2 = 9K^2 - 3K(s_1E_{i,j} + s_2 E_{k,l}) + (s_1E_{i,j} + s_2 E_{k,l})^2 \geq 45 - 6 - 4 > 0 \\
		&& D.E = 3 + s_1 E_{i,j} . E + s_2 E_{k,l} . E \geq 1
	\end{eqnarray*}
	The Kodaira vanishing theorem ($\cite{Ha}$, III.7.15, $h^i(X, \oo(K) \otimes \oo(H)) = 0$ for $i>0$ and an ample divisor $H$) implies that if we add $K$ to the divisors mentioned above, we will receive a divisor whose nonzero cohomologies vanish. 
\end{proof}

We may calculate $\chi(\oo(D))$ in the following way:

The Riemann-Roch theorem shows that $\chi(\oo(D))$ for $D = \sum_{1\leq i < j \leq n} a_{i,j} E_{i,j}$ must be a quadratic form in variables $a_{i,j}$. Coefficients of this form may be calculated from the system of equations:
\begin{eqnarray*}
	\chi(\oo(D)) = \dim(\cc[{\gg}_5]_{\lambda(D)}) \mbox { for } D = -2K + s_1E_{i,j} + s_2E_{k,l}
\end{eqnarray*}
The Hilbert series obtained in the second chapter gives us dimensions of gradations. One can check that this system of equations has the required order: its solution gives us unique coefficients of the quadratic form we are looking for. Obviously, they are equal to the coefficients calculated directly through the Riemann-Roch theorem. The code in $Sage$ may be found in "Appendix".

\appendix

\section{Examples of Hilbert series for small dimensions and a program in Sage which calculates numerators of Hilbert series}
Hilbert series for small $n$: 
\begin{itemize}
	\item for $n=2$ 
		\begin{eqnarray*} \frac{1}{1-z_1z_2} \end{eqnarray*} 
	\item for $n=3$
		\begin{eqnarray*} \frac{1}{(1-z_1z_2)(1-z_2z_3)(1-z_3z_1)} \end{eqnarray*}
	\item for $n=4$
		\begin{eqnarray*}\frac{1-z_1z_2z_3z_4}{\prod_{1 \leq i < j \leq 4}(1 - z_iz_j)} \end{eqnarray*}
	\item for $n=5$
		\begin{eqnarray*}
		&&	(-z_0^2z_1^2z_2^2z_3^2z_4^2 + z_0^2z_1z_2z_3z_4 + z_0z_1^2z_2z_3z_4 + z_0z_1z_2^2z_3z_4 + z_0z_1z_2z_3^2z_4 + z_0z_1z_2z_3z_4^2 - \\
		&& z_0z_1z_2z_3 - z_0z_1z_2z_4 - z_0z_1z_3z_4 - z_0z_2z_3z_4 - z_1z_2z_3z_4 + 1) / \prod_{1 \leq i < j \leq 5} (1 - z_iz_j) \end{eqnarray*}

\end{itemize}
\begin{verbatim}
Homogenous = SFAHomogeneous(QQ)
Elementary = SFAElementary(QQ)

def e(n,deg):
    return Polynomial(Elementary([deg]).expand(n)) 
		if deg > 0 else (1 if deg == 0 else 0)
def h(n, deg):
    return Polynomial(Homogenous([deg]).expand(n)) 
		if deg > 0 else (1 if deg == 0 else 0)
def H(n,s,l):
    return sum(map(lambda r: h(n,s-r)*e(n,r)*(-1)^r, 
                             range(0,l+1)))
                             
the def a(n,k,l):
    def a_help(n,k,l,b):
        return sum(map(lambda a: ((-1)^a)*e(n,a)*H(n,k+b-a,b), 
                                 range(0, k+l+1)))
    return sum(map(lambda b: (z[n]^b)*a_help(n,k,l,b), range(0,n)))

# main_hilbert calculates coefficients aij
def main_hilbert(size, n):
    if n == 2 or n==1:
        return Matrix(Polynomial, size, 1, lambda i,j: 1 if i==0 else 0)
    M = Matrix(Polynomial, size, size, lambda i,j: a(n-1, i-j, j))
    return M*main_hilbert(size, n-1) 
    
# find_hilbert_series finds the numerators of the Hilbert series           
def find_hilbert_series(n):
    v = [Matrix(Polynomial, n+1, 1, lambda i,j: z[n]^i) for n in range(0,n+1)]
    return (v[n].transpose()*main_hilbert(n+1, n))[0][0]

%Example
num_var = 5
Polynomial = PolynomialRing(QQ, num_var, 'z')
z = PolynomialRing(QQ, num_var, 'z').gens()
res = find_hilbert_series(num_var-1)

\end{verbatim}

\section{A program in Sage which calculates the Euler characteristic of invertible sheaves on a spaces with four blown-up points}

\begin{verbatim}
num_var = 5; precision = 25
num_pairs = Integer(num_var*(num_var-1)/2)
z = PolynomialRing(QQ, num_var, 'z').gens()

# function base_change takes coefficients of a divisor \sum a_{i,j} E_{i,j} 
# and returns the coefficients in the basis E_{0,1},E_{0,2},E_{0,3}, E_{0,4},E_{1,2}. 
# Calculations are based on the proposition 3.1.3
def base_change(a01,a02,a03,a04,a12,a13,a14,a23,a24,a34):
    return [a01 + a23 + a24 + a34, a02 + a13 + a14 + a34, 
            a03-a13-a23-a34, 
            a04-a14-a24-a34,  
            a12+a13+a14+a23+a24+a34]

# kapranov_iso function takes coefficients of the divisor in basis
# E_{0,1},E_{0,2},E_{0,3}, E_{0,4},E_{1,2} and returns the gradation
# of the corresponding monomial of Grassmannian
def kapranov_iso(a01,a02,a03,a04,a12):
    return [a01+a02+a03+a04, a01, a02, a03+a12, a04+a12]

# calculation of the denominator of the Hilbert Series
denom = reduce(lambda x,y: x*y, [1 - z[i]*z[j] 
                                 for i in range(num_var) 
                                 for j in range(i+1,num_var)])
 
# fixing precision 
P = PowerSeriesRing(QQ, 5, "z", default_prec = precision)
# calculation of Hilbert series 
H = (P)(-z[0]^2*z[1]^2*z[2]^2*z[3]^2*z[4]^2 + z[0]^2*z[1]*z[2]*z[3]*z[4] + 
         z[0]*z[1]^2*z[2]*z[3]*z[4] + z[0]*z[1]*z[2]^2*z[3]*z[4] + 
         z[0]*z[1]*z[2]*z[3]^2*z[4] + z[0]*z[1]*z[2]*z[3]*z[4]^2 - 
         z[0]*z[1]*z[2]*z[3] - z[0]*z[1]*z[2]*z[4] - z[0]*z[1]*z[3]*z[4] - 
         z[0]*z[2]*z[3]*z[4] - z[1]*z[2]*z[3]*z[4] + 1)/denom

# calculation of the dictionary of coefficients of H
Hdict = H.dict() 

# We are looking for a quadratic form in variables a01,a02,a03,a04,a12 
# monomial_values returns values of subsequent monomials 
# of degree less or equal to two in variables a01,a02,a03,a04,a12 
def monomial_values(a):
    b = [1] + a
	    return [b[i]*b[j] for i in range(num_var+1)
                      for j in range(i,num_var+1)]

# base_vector returns a vector of size num_pairs, which has a one
# at the i-th position and zeroes at all other positions
def base_vector(i):
    v = vector(QQ,num_pairs)
    v[i] = 1
    return v
# one_vector returns a vector of size num_pairs consisting only of ones
def one_vector():
    return vector(QQ, [1 for i in range(num_pairs)])
				 
# list_of_divs returns ununique coefficients of chosen 
# divisors (-2K +- E_{i,j} +- E_{k,l})
def list_of_divs():
    return [one_vector() + s1*base_vector(k) + s2*base_vector(l) 
            for k in range(num_pairs) 
            for l in range(k,num_pairs) 
            for s1 in [1,-1] 
            for s2 in [1,-1]]

# div_into_eq takes the divisor's coefficients and returns a vector 
# with values of the monomials 
def div_into_eq(a):
    return monomial_values(base_change(*a)) 

# eqmatrix is a matrix of the system of equations
eqmatrix = Matrix(map(lambda a : div_into_eq(a), list_of_divs()))  

# solution is a vector of values of the quadratic form
solution = vector(map(lambda a: Hdict[
    sage.rings.polynomial.polydict.ETuple(
	        kapranov_iso(*base_change(*a)))], list_of_divs())) 

eqmatrix.solve_right(solution)
\end{verbatim}

\bibliography{eng_praca}

\end{document}